\DeclareMathAlphabet{\pazocal}{OMS}{zplm}{m}{n}
\newtheorem{theorem}{Theorem}[section]
\newtheorem{corollary}[theorem]{Corollary}
\newtheorem{lemma}[theorem]{Lemma}
\newtheorem{proposition}[theorem]{Proposition}
\theoremstyle{definition}
\newtheorem{definition}[theorem]{Definition}
\theoremstyle{notat}
\theoremstyle{claim}
\theoremstyle{remark}
\numberwithin{equation}{section}
\theoremstyle{example}
\newtheorem{example}[theorem]{Example}
\title{ c\MakeLowercase{yclotomic} \MakeLowercase{$p$-adic} m\MakeLowercase{ulti}-z\MakeLowercase{eta} v\MakeLowercase{alues} }
\author{S\MakeLowercase{\.{i}nan} \"{U}\MakeLowercase{nver}}
\address{Ko\c{c} University, Mathematics Department. Rumelifeneri Yolu, 34450, Istanbul, Turkey}
\address{Freie Universit\"{a}t Berlin, Mathematics Department, Arnimallee 3, 14195, Berlin, Germany}
\email{sunver@ku.edu.tr}
\begin{document}
\maketitle
\noindent

\begin{abstract}
The  cyclotomic $p$-adic multi-zeta values are the $p$-adic periods of  $\pi_{1}(\mathbb{G}_{m} \setminus \mu_{M},\cdot),$ the unipotent fundamental group of the multiplicative group minus the $M$-th roots of unity.    
In this paper,   we compute the cyclotomic $p$-adic multi-zeta values at all depths. This paper  generalizes  the results in  \cite{U2} and \cite{U3}. Since the main result gives quite explicit formulas we expect it to be useful in proving non-vanishing and transcendence results for these $p$-adic periods and also, through the use of $p$-adic Hodge theory, in proving non-triviality results for the corresponding $p$-adic Galois representations.  

\end{abstract}

\section{Introduction} 

There are not many examples of  motives over $\mathbb{Z}.$ The most basic examples of such motives are the Tate motives.  Another one is the unipotent completion of the fundamental group of the thrice punctured projective line $\pi_{1} (\mathbb{G}_{m} \setminus \{ 1\},\cdot),$ at a suitable tangential basepoint \cite{de}. In fact by a theorem of F. Brown, this motive generates the tannakian category of mixed Tate motives over $\mathbb{Z}.$ The complex periods of $\pi_{1} (\mathbb{G}_{m} \setminus \{ 1\},\cdot)$ are $\mathbb{Q}$-linear combinations of  the multi-zeta values given by 
$$
\zeta(s_{1},s_{2}, \cdots,s_{k}):=\sum _{0<n_{1}<\cdots <n_{k}}\frac{1}{n_{1} ^{s_{1}} n_{2} ^{s_{2}} \cdots n_{k} ^{s_{k}}},
$$
for $s_{1}, \cdots, s_{k-1} \geq 1$ and $s_{k}>1.$ These values were defined by Euler and studied by Deligne, Goncharov, Terasoma, Zagier etc.

Similarly, one can consider the unipotent fundamental group $\pi_{1}(\mathbb{G}_{m} \setminus \mu_{M}, \cdot)$ of the multiplicative group minus the group $\mu_{M}$ of $M$-th roots of unity for $M\geq 1.$ If $\pazocal{O}_{M}$ denotes the ring of integers of the $M$-th cyclotomic field, then this fundamental group defines a mixed Tate motive over $\pazocal{O}_{M}[1/M].$ The periods of this motive are linear combinations of  the cyclotomic multi-zeta values 
$$
\sum _{0<n_{1}<\cdots <n_{k}}\frac{\zeta^{i_1n_{1}+\cdots i_k n_k}}{n_{1} ^{s_{1}} n_{2} ^{s_{2}} \cdots n_{k} ^{s_{k}}},
$$
where $i_{j},$ for $1 \leq j \leq k,$ are fixed integers and $\zeta$ is an $M$-th root of unity. These values were studied and related to modular varieties and the theory of higher cyclotomy in \cite{gon}.

 This paper concerns the $p$-adic periods of the motive $\pi_{1}(\mathbb{G}_{m} \setminus \mu_{M}, \cdot).$  We have a realisation map from the category of mixed Tate motives over a number field to the category of mixed Tate filtered   $(\varphi,N)$-modules for any non-archimedean place of the number field \cite{cu}. Also for any (framed) mixed Tate filtered $(\varphi,N)$-module we  associate a period. The cyclotomic $p$-adic multi-zeta values (henceforth {\it cmv}'s) are the $p$-adic periods associated to the mixed Tate motive  defined by the unipotent fundamental group of $\mathbb{G}_{m} \setminus \mu_{M},$ for $p \nmid M.$ These values were defined in terms of the action of the crystalline frobenius on the fundamental group 
in \cite{U2},  generalising the notion of $p$-adic multi-zeta values (henceforth {\it pmv}'s) in \cite{U1}. In this paper we  give an explicit series  representation of  these $p$-adic periods. This is a generalisation of \cite{U3} to the cyclotomic case.

 We give an overview of the contents of the paper. In \textsection 2,  we start with studying certain types of series in terms of which the {\it cmv}'s will be expressed. These series can be of two types, denoted by  $\sigma$ or $\gamma,$ and are called the {\it cyclotomic p-adic iterated sum series} (or {\it ciss}). In fact the {\it ciss} are divergent and we will need to regularise them.  The regularisation  can be intuitively thought of as removing a combination of the summands which have large $p$ factors in the denominators that cause divergence. More precisely, we extend the algebra of $M$-power series functions by adding some highly divergent functions which we denote by $\sigma_{p}$ and we show in Proposition 2.9 that the {\it ciss} are contained in this algebra. 
 In Corollary 2.6, we show that the $\{ \sigma_{p}(\underline{s};\underline{i} )\}$'s form a basis for this extended algebra as a module over the algebra of $M$-power series functions. These two facts help us to define the regularised versions of the {\it ciss}, denoted by $\tilde{\sigma}$ and $\tilde{\gamma},$ in Definition 2.10. The limits of these regularised series are called the {\it cyclotomic p-adic iterated sums} (or {\it cis}), and denoted by $\underline{\sigma}$ and $\underline{\gamma}.$ 
Let $\zeta$ be a primitive $M$-th root of unity.  Let  $\pazocal{P}_{M}$ denote the    $\mathbb{Q}(\zeta)$-algebra generated by the {\it cis},  and $\pazocal{Z}_{M}$ the algebra generated by the {\it cmv}. The main theorem is 
\begin{theorem} 
We have the inclusion $\pazocal{Z}_{M} \subseteq \pazocal{P}_{M}.$
\end{theorem}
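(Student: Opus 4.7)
The strategy is to realise each cmv as a coefficient of the crystalline Frobenius $\varphi$ acting on the de Rham unipotent fundamental group of $\mathbb{G}_{m} \setminus \mu_{M}$, and then to compute this Frobenius action by solving an explicit differential equation whose iterative solution produces precisely the \emph{ciss} from \textsection 2. After regularisation these iterated sums become the \emph{cis}, so every cmv will land in $\pazocal{P}_{M}$.

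More concretely, I would first package the cmv's into a noncommutative generating series whose coefficients describe $\varphi$ on the fundamental torsor at a suitable tangential basepoint; this is the cyclotomic generalisation of the setup of \cite{U3}. Differentiating the Frobenius-equivariance relation gives a first-order linear equation for this generating series in a $p$-adic variable, and iterating the equation produces sums of the shape $\sigma(\underline{s};\underline{i})$ and $\gamma(\underline{s};\underline{i})$ studied in \textsection 2. The coefficients produced at each stage are $\mathbb{Q}(\zeta)$-linear combinations of cmv's of smaller depth, so one proceeds by induction on the depth $k$: assuming the theorem at depth $<k$, one writes the depth-$k$ coefficient as a $\mathbb{Q}(\zeta)$-linear combination of ciss, modulo terms already known to lie in $\pazocal{P}_{M}$.

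At each stage the raw iterated sums are divergent, but Proposition 2.9 guarantees that they lie in the extended algebra of $M$-power series functions, while Corollary 2.6 ensures that the divergent part decomposes uniquely along the basis $\{\sigma_{p}(\underline{s};\underline{i})\}$. This is precisely the input required by Definition 2.10 to replace each iterated sum by its regularisation $\tilde{\sigma}$ or $\tilde{\gamma}$; passing to the $p$-adic limit then produces the cis $\underline{\sigma}$ and $\underline{\gamma}$, which by definition generate $\pazocal{P}_{M}$.

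The principal difficulty is handling the regularisation coherently at all depths. One has to verify that the highly divergent $\sigma_{p}$-contributions cancel between the naive iterated solution of the Frobenius equation and the lower-depth correction terms coming from the induction, so that only the convergent, regularised part survives in the final formula. This is where the cyclotomic generalisation goes genuinely beyond \cite{U2,U3}: the combinatorics of these cancellations depends on the roots of unity $\zeta^{i_{j}}$ decorating the indices, and bookkeeping them uniformly across arbitrary depth is the main technical task.
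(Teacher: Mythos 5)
Your overall skeleton is the right one --- the paper does realise the \emph{cmv}'s as coefficients of a Frobenius generating series, derives a first-order differential equation for it, and feeds the resulting iterated sums into the regularisation machinery of \textsection 2. But two essential mechanisms are missing from your outline, and without them the argument does not close. First, the differential equation by itself never yields $g_{i}[e^{I}]\in\pazocal{P}_{M}$ for an individual monomial: extracting the constant term of the equation (i.e.\ evaluating the limit $\lim_{N\to\infty}q^{N}\mathbcal{g}_{\pazocal{F}}\{e^{I}\}(q^{N})=0$) only produces \emph{pairwise combinations} such as $\zeta^{-\underline{a}}g_{a}[e^{J}e_{b}]-\zeta^{-\underline{b}}g_{b}[e_{a}e^{J}]\in\pazocal{P}_{M}$, plus the cases where $e^{I}$ begins or ends with $e_{0}$. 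To untangle these and isolate each $g_{i}[e^{J}]$ the paper must bring in a second, independent relation, namely the identity at infinity $h\cdot\sum g_{i}^{-1}e_{i}g_{i}=\sum e_{i}\cdot h$ with $h=\mathbcal{g}_{\pazocal{F}}(\infty)$, and then run telescoping arguments (reducing $h[e^{R}]$ to $h[e_{0}^{w}]=0$, and reducing $g_{i}[e_{i}^{r}e^{S}e_{c}e_{i}^{s}]$ to a monomial ending in $e_{c}\neq e_{i}$). Your proposal never mentions this relation, and it is not a bookkeeping detail: without it the system of linear constraints coming from the differential equation is underdetermined.

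Second, your ``principal difficulty'' --- verifying that the divergent $\sigma_{p}$-contributions cancel between the naive iterated solution and the lower-order corrections --- is not resolved in the paper by any direct combinatorial cancellation. Instead the paper uses the fact that $\mathbcal{g}_{\pazocal{F}}[e^{I}]$ is a rigid analytic function on the affinoid $\pazocal{U}_{M}$, so that by \cite[Corollary 3.0.4]{U2} the limits $\lim_{N\to\infty}lq^{N}\mathbcal{g}_{\pazocal{F}}\{e^{I}\}(lq^{N})$, once they exist, must vanish; combined with the observation that $n\cdot\mathbcal{g}_{\pazocal{F}}\{e^{I}\}(n)$ is an $M$-power series function, this forces $\mathbcal{g}_{\pazocal{F}}\{e^{I}\}$ itself to be an $M$-power series function, i.e.\ forces the divergent part to be absent. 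This geometric input replaces the ``uniform bookkeeping across arbitrary depth'' you anticipate, and indeed the paper's induction is on the \emph{weight} of the monomial rather than on the depth, with three statements (for $\mathbcal{g}_{\pazocal{F}}\{e^{I}\}$, for $h[e^{J}]$, and for $g_{i}[e^{J}]$) proved simultaneously. As written, your proposal would stall exactly at the point where one must convert the relations among pairs of coefficients into membership of each coefficient in $\pazocal{P}_{M}$.
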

 The proof of this theorem occupies the whole of \textsection 3. The proof expresses in an inductive way every {\it cmv} as a series and should be thought of as an explicit computation of these values.

Finally, we would like to mention that Furusho defined in \cite{fur} another $p$-adic version of multi-zeta values that is essentially equivalent to ours in \cite{U1}. More precisely, the two versions generate the same algebra and each version can be obtained from the other one by elementary linear algebraic manipulations. This is explained in detail in \cite[Lemma 3.13]{U3}.  One can also define a version of cyclotomic version of Furusho's $p$-adic multi-zeta values which will again be essentially equivalent to the above version by the proof of \cite[Lemma 3.13]{U3}. 

{\it Acknowledgements.} This paper was written while the author was visiting H. Esnault at Freie Universit\"{a}t Berlin supported  by the fellowship for experienced researchers of the  Humboldt Foundation. The author thanks Prof. H. Esnault and the Humboldt Foundation for this support. 

\section{Cyclotomic p-adic iterated sum series}

Fix a prime $p$ and $M \geq 1,$ with $p \nmid M.$ Let $\zeta$ be a  primitive $M$-th root of unity, $K=\mathbb{Q}_{p}(\zeta)$ and $q,$ the cardinality of the residue field of $K.$       For $\underline{s}:=(s_{1},\cdots,s_{k}),$ with $0 \leq s_{i};$ $\underline{i}:=(i_{1},\cdots,i_{k})$ with $0\leq i_{j}<M;$ and $\underline{m}:=(m_{1},\cdots,m_{k}),$ with $0 \leq m_{i}<p,$ let  
$$
\sigma(\underline{s};\underline{i};\underline{m})(n):=\sum \frac{\zeta^{i_{1}n_{1}+\cdots i_{k}n_{k}}}{n_{1} ^{s_{1}} \cdots n_{k} ^{s_{k}}},
$$ where the sum is over $0<n_{1}<n_{2}<\cdots < n_{k}<n$ with $p|(n_{i}-m_{i}).$ If we let $\underline{n}:=(n_{1},\cdots, n_{k}) $ we will also write the numerator of the above summand as $\zeta^{\underline{i}\cdot \underline{n}}$ and the denominator as $\underline{n}^{\underline{s}}.$

Similarly, we let $\gamma(\underline{s};\underline{i};\underline{m})(n):=\frac{\zeta^{i_{k}n}}{n^{s_{k}}}\cdot\sigma(\underline{s}';\underline{i}';\underline{m}')(n),$ if $p|(n-m_{k})$ and 0 otherwise, with $\underline{s}'=(s_{1},\cdots,s_{k-1}),$ $\underline{i}':=(i_{1},\cdots, i_{k-1}),$  and $\underline{m}'=(m_{1},\cdots,m_{k-1}).$    Let $\sigma_{p}(\underline{s};\underline{i})(n):=\sigma(\underline{s};\underline{i};\underline{0})(n),$ where $\underline{0}=(0,\cdots,0).$ We define the {\it depth}  as  $d(\underline{s})=k$ and the {\it weight} as $w(\underline{s}):=\sum s_{i}.$

We call a sequence of the form $\sigma(\underline{s};\underline{i};\underline{m})$ or $\gamma(\underline{s};\underline{i};\underline{m})$ a {\it cyclotomic $p$-adic iterated sum series} (or {\it ciss}). 

\begin{definition}
Let $n \in \mathbb{N}$ and  let $f: \mathbb{N}_{\geq n} \to K$ be any function. We say that $f$ is an {\it $M$-power series function}, if there exist power series $p_{i} (x) \in K[[x]],$ which converge on $D(0,r_{i})$ for some $r_{i} > |p|$, for $0 < i  \leq pM,$ such that 
$
f(a)=p_{i}(a-i),
$ 
for all $a \geq n $ and $pM|(a-i).$ 
 \end{definition}
 Clearly there is a unique $M$-power series function with domain $\mathbb{N} $ and  which extends $f.$  We identify two $M$-power series functions if they agree on their common domains of definition. 
By the Weierstrass preparation theorem, the power series $p_{i}$ in the above definition are unique. Fix $0< l \leq pM,$ and let  $f$ be as above. Then there is a power series $p(x) \in K[[x]] $ which converges on some $D(0,r)$ with $r>|p|$ and 
$
f(lq^N)=p(lq^N),
$
for $N$ sufficiently large.

\begin{example}\label{exam main power}{\rm

 (i) Let $s \in \mathbb{Z}$ and $f(k):=\zeta^{ik} k^{s},$ for $p \nmid k$ and $f(k)=0$ for $p|k. $      Then $f$ is an  $M$-power series function.

(ii) Clearly the sums and products of  $M$-power series functions are  $M$-power series functions. 

(iii) Let $f$ be an $M$-power series function. For any $0 <l \leq pM,$ with $p|l$ let 
$$
f_{l}:= \lim _{n \to 0 \atop {pM | (n-l)}}f(n),
$$
with  $n$ ranging over positive integers such that $pM | (n-l),$ and tending to 0 in the $p$-adic metric.

Let $f^{[1]}$ be defined by 
$$
f^{[1]}(k)=\frac{f(k)-f_{l}}{k},
$$
if $p|k$ and  $pM|(k-l);$ and $f^{[1]}(k)=0,$ if $p \nmid k.$ 
We then see that  $f^{[1]}$ is an $M$-power series function. In fact, if $p|l,$ and $p$ is a power series around 0 such that $f(n)=p(n)$ for all $pM | (n-l)$ then $f^{[1]}(n)=q(n),$ for all $pM | (n-l),$ where   
$$
q(x)= \frac{p(x)-p(0)}{x}.
$$ 
Inductively, we let $f^{[k+1]}:=(f^{[k]})^{[1]}.$

(iv) Using the notation as above, let $f^{(1)}$ be defined by  $f^{(1)}(k):=f^{[1]}(k),$ if $p|k;$ and $f^{ (1)}(k)=\frac{f(k)}{k},$ if $p \not  |k.$ Then $f^{ (1)} $ is also an $M$-power series function. 

}
\end{example}

\begin{proposition}\label{prop main power}
Let $f: \mathbb{N}_{\geq n_{0}} \to K$ be an $M$-power series function. If we define $F:  \mathbb{N}_{\geq n_{0}} \to K$ by 
$$
F(n):= \sum _{n_{0}\leq  k\leq n}f(k)
$$
then $F$ is also an $M$-power series function. 
 \end{proposition}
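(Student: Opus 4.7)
Fix a residue $i \in \{1,\ldots,pM\}$ and consider $n \equiv i \pmod{pM}$, writing $n = i + pMt$ with $t \in \mathbb{Z}_{\geq 0}$. I aim to show that, on this residue class, $F(n) = P_i(n-i)$ for some power series $P_i$ convergent on $D(0, r)$ with $r > |p|$. First, decompose $F(n) = \sum_{j=1}^{pM} F_j(n)$, where $F_j(n) := \sum f(k)$ over $n_0 \leq k \leq n$ with $k \equiv j \pmod{pM}$. Writing $k = j + pMs$ and using $f(j + pMs) = p_j(pMs)$, one finds $F_j(n) = \sum_{s = s_0(j)}^{T_j} p_j(pMs)$, where $s_0(j)$ is a fixed starting index independent of $n$, and the upper limit is $T_j = t$ if $j \leq i$ and $T_j = t - 1$ if $j > i$; in both cases $pM\, T_j$ equals $n-i$ or $n-i-pM$.

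\medskip

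The proposition thus reduces to the following sub-claim: for any $Q(v) = \sum_{m \geq 0} a_m v^m$ convergent on $D(0, r)$ with $r > |p|$, the function $H(T) := \sum_{s=0}^{T} Q(pMs)$ is the restriction to nonnegative integers of a power series $\tilde{H}(u)$ in $u = pMT$ that converges on $D(0, r)$. Expanding $Q(pMs) = \sum_m a_m (pM)^m s^m$ and applying the Stirling-number identity
\[
\sum_{s=0}^{T} s^m \;=\; \sum_{k=0}^{m} S(m,k)\, k!\, \binom{T+1}{k+1}
\]
produces $H(T) = \sum_{k \geq 0} C_k \cdot k!\,\binom{T+1}{k+1}$ with $C_k := \sum_{m \geq k} a_m (pM)^m S(m,k)$. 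Since $S(m,k) \in \mathbb{Z}$ has $p$-adic norm $\leq 1$, and since $|a_m (pM)^m| \leq A(|p|/r)^m$ for a suitable constant $A$ (using that $|a_m| r^m$ is bounded), we get $|C_k| \leq A(|p|/r)^k$.

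\medskip

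Substituting $T = u/(pM)$ converts $k!\binom{T+1}{k+1}$ into the polynomial
\[
\frac{1}{(pM)^{k+1}(k+1)} \prod_{\ell=-1}^{k-1}\bigl(u - \ell\, pM\bigr)
\]
of degree $k+1$ in $u$. For any $u$ with $|u| \leq r'$, $|p| < r' < r$, the ultrametric inequality gives $|u - \ell\, pM| = r'$ for each $\ell$ (since $|\ell\, pM| \leq |p| < r'$), so this polynomial has sup-norm $(r')^{k+1}/(|p|^{k+1}\,|k+1|)$. Combined with $|C_k| \leq A(|p|/r)^k$, the $k$-th summand of $\tilde{H}(u) := \sum_{k \geq 0} C_k \cdot k!\binom{u/(pM)+1}{k+1}$ has sup-norm on $|u| \leq r'$ at most $A r'\,(r'/r)^k/(|p|\,|k+1|)$. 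Using $1/|k+1| \leq k+1$ and $r'/r < 1$, this decays geometrically in $k$, so $\tilde{H}$ is a genuine power series in $u$ analytic on $D(0, r)$ satisfying $\tilde{H}(pMT) = H(T)$ for every integer $T \geq 0$.

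\medskip

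To finish, I apply the sub-claim to each $Q = p_j$, absorbing the constant contribution from the lower endpoint $s_0(j)$ and, when $j > i$, the shift by $pM$ in the upper endpoint; the latter preserves the radius of convergence, since $|pM| < r$ implies that $\tilde{H}_j(x - pM)$ is a convergent power series in $x$ on $D(0, r)$. Summing over $j \in \{1, \ldots, pM\}$ yields $P_i(n-i)$, a power series in $n - i$ convergent on a disk of radius $> |p|$, as required. The principal obstacle is the convergence estimate in the sub-claim: a direct Faulhaber expansion via Bernoulli numbers $B_\ell$ would introduce $p$-adically unbounded denominators, whereas the Stirling-number expansion has only integer coefficients and leaves the mild $1/(k+1)$ factor, which is easily dominated by the geometric decay of the $C_k$'s.
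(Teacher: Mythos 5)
The paper states Proposition \ref{prop main power} without any proof, so there is no argument of the author's to compare yours against; judged on its own terms, your proof is correct. The reduction is sound: splitting the sum over the residue classes $j$ modulo $pM$, writing each piece as a difference of values of the partial-sum function $H_j$, and observing that the relevant arguments are $n-i$ or $n-i-pM$ (the latter shift being harmless since $|pM|<r'$). The heart of the matter is the sub-claim, and your treatment of it is the right one: expanding $\sum_{s=0}^{T}s^{m}$ in the basis $k!\binom{T+1}{k+1}$ with integral Stirling coefficients means the only denominators introduced are the factors $k+1$, whose $p$-adic size is controlled by $1/|k+1|\leq k+1$ and absorbed by the geometric decay $(r'/r)^{k}$; this is exactly what makes the statement nontrivial $p$-adically, and your closing remark contrasting this with a Bernoulli-number expansion is apt. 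Two small points worth tightening. First, the equality $|u-\ell pM|=r'$ holds on the boundary $|u|=r'$ (for $|u|<r'$ one only gets $\leq r'$); the maximum principle then gives the sup-norm bound you use, so nothing is lost. Second, when $n-n_{0}<pM$ some of the inner sums $F_{j}(n)$ are empty, so the telescoping identity $F_{j}(n)=H_{j}(T_{j})-H_{j}(s_{0}(j)-1)$ needs the inequality $T_{j}\geq s_{0}(j)-1$; this does always hold (the smallest admissible $k\equiv j$ is at most $n_{0}+pM-1$ and the largest is at least $n-pM+1$, so their difference is strictly less than $2pM$), whence both sides vanish in the empty case and the formula is valid for every $n\geq n_{0}$ in the residue class, as the definition of an $M$-power series function requires.
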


The following lemma on power series will be essential while we are proving the linear independence of the $\sigma_{p}$'s.
 
\begin{lemma}
 Let $f,g \in K[[z]]$ be two power series which are  convergent on $D(a),$ for some $a>1.$ Suppose that $g \neq 0,$ and let $h:=f/g.$   If there exist  $a_{ij} \in K$ and $n\geq 1$ such that 
 $$
 h(z+M)-h(z)=\sum_{1\leq i\leq n \atop {0 \leq j < M }} \frac{a_{ij}}{(z+j)^{i}}
 $$ for infinitely many  $z \in D(a)$      then $h$ is constant and  $a_{ij}=0,$ for all $i$ and $j.$  
 \end{lemma}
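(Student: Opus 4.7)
The plan has three stages: (i) extend the given identity to all of $D(a)$ by rigidity, (ii) show all $a_{ij} = 0$ by a pole-chasing argument, (iii) deduce that $h$ is constant from the resulting $M$-periodicity.

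\emph{Stage (i).} Set $R(z) := \sum_{1 \leq i \leq n,\, 0 \leq j < M} a_{ij}/(z+j)^i$ and $H(z) := h(z+M) - h(z) - R(z)$. Since $|M| = 1 < a$, the shift $z \mapsto z + M$ preserves $D(a)$, so $H$ is meromorphic there. Clearing denominators, $H(z) \cdot g(z) g(z+M) \prod_{j=0}^{M-1}(z+j)^n$ is analytic on $D(a)$ and vanishes on the given infinite set of $z$'s; provided these points accumulate strictly inside $D(a)$ (which holds in the intended application, where they form a sequence tending to $0$), Weierstrass preparation applied on a closed subdisc forces this analytic function to vanish identically, and therefore $h(z+M) - h(z) = R(z)$ throughout $D(a)$.

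\emph{Stage (ii).} Denote by $c^{(\alpha)}_i$ the coefficient of $(z-\alpha)^{-i}$ in the Laurent expansion of $h$ at a point $\alpha \in D(a)$. Matching principal parts at $z = \alpha$ in the extended identity gives $c^{(\alpha + M)}_i - c^{(\alpha)}_i$ equal to the coefficient of $(z-\alpha)^{-i}$ in $R(z)$. For $\alpha = -j - kM$ with $k \geq 1$, this coefficient vanishes since $R$'s poles lie in $\{-j' : 0 \leq j' < M\}$; iterating yields $c^{(-j - kM)}_i = c^{(-j)}_i$ for every $k \geq 0$. If some $c^{(-j)}_i$ were nonzero, $h$ would have a pole of order at least $i$ at each $-j - kM$; since $\gcd(M, p) = 1$, these integers have $0$ as a $p$-adic accumulation point in $D(a)$, contradicting the fact that $g$, being analytic on $D(a)$, has only finitely many zeros in any closed subdisc. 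Hence $c^{(-j)}_i = 0$ for all $i, j$. The analogous reasoning at $\alpha = kM - j$ with $k \geq 1$ forces $c^{(M - j)}_i = 0$. Specializing the principal-part identity to $\alpha = -j$ then reads $0 - 0 = a_{ij}$, so all $a_{ij}$ vanish.

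\emph{Stage (iii).} With $R \equiv 0$, $h$ satisfies $h(z+M) = h(z)$ on $D(a)$. Choose any $\beta \in D(0,1)$ avoiding the finitely many poles of $h$ there; then $h(\beta + kM) = h(\beta)$ for all $k \geq 1$. By compactness of the closed unit disc in $K$, the sequence $\{\beta + kM\}$ admits a subsequence converging to some $\gamma \in D(0,1) \subset D(a)$, so $h - h(\beta)$ is meromorphic on $D(a)$ and vanishes on an infinite set accumulating strictly inside, hence is identically zero by the same rigidity used in stage (i). Thus $h$ is constant. The main obstacle is stage (i): the hypothesis only asserts the identity at infinitely many $z \in D(a)$ with no explicit accumulation condition, and the argument depends on these points not escaping to the boundary of $D(a)$; in the intended applications this is automatic, but otherwise it would require extra input.
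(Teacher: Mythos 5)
Your proof is correct and follows essentially the same strategy as the paper's: translate by $M$ to chase poles of $h$ along the arithmetic progressions $-j-kM$ and use Weierstrass preparation to bound the number of poles of $h=f/g$ in the closed unit disc, forcing all $a_{ij}$ to vanish. Your write-up is in fact somewhat more complete than the paper's, which leaves both the rigidity upgrade of stage (i) and the constancy of $h$ in stage (iii) implicit.
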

 
\begin{proof}
The proof is a generalization of the proof of \cite[Lemma 2.0.2]{U3}. Note that by the Weierstrass preparation theorem the number of poles of $h$ on the closed unit disc $D(1)$ is finite.  This set is nonempty if at least one $a_{ij} \neq 0.$ Assume that this is the case and let this set be $\{ \alpha_{1},\cdots, \alpha_{k}\}.$ Arrange $\alpha_{i}$ so that $\alpha_{1}$ is a pole of $h(z),$ and hence $\alpha_{1} \in \{0, -1, \cdots, -(M-1) \}.$   Since $\alpha_{1}-M$ is not in the last set, it cannot be a pole of $h(z+M)-h(z),$ but since it is a pole of $h(z+M),$   it also has to be a pole of $h(z).$ Let $\alpha_{2}=\alpha_{1}-M.$ Continuing in this manner we will get $\alpha_{i}=\alpha_{1}-(i-1)M,$ and that $\alpha_{1}-kM$ is a pole of $h(z+M)-h(z)$ and hence is in $\{0, \cdots, -(M-1) \}.$ This is a contradiction. 
\end{proof}

Let $\mathcal{P}_{M}$ denote the algebra of $M$-power series functions which are 0 on $\mathbb{N} \setminus p\mathbb{N}.$ We will consider these as functions on $p\mathbb{N}.$   They are functions $f:p\mathbb{N} \to K$ such that there exist power series $p_{i},$ for $1\leq i \leq M,$ around 0 with radius of convergence greater than $|p|$ and which satisfy $f(pk)=p_{i}(pk)$ for $M|(k-i).$ Let us consider $\sigma_{p}(\underline{s};\underline{i})$ as functions on $p\mathbb{N}$ as well and let $\mathcal{P}_{M,\sigma}$ denote the module over $\mathcal{P}_{M}$ generated by the $\sigma_{p}(\underline{s};\underline{i})$   in $F(p\mathbb{N},K).$ This is an algebra as can be seen by using the shuffle product formula for series.  
  
  \begin{proposition}
The algebra $\mathcal{P}_{M,\sigma}$ is free with basis $\{\sigma _{p} (\underline{s},\underline{i}) | (\underline{s},\underline{i}) \in \cup_{n} (\mathbb{N}^{\times n}\times [0, M-1]^{\times n})  \}$ as a module over $\mathcal{P}_{M}.$ 
  \end{proposition}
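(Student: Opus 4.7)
The proposition reduces to showing that the generators $\sigma_p(\underline{s},\underline{i})$ are $\mathcal{P}_M$-linearly independent, since $\mathcal{P}_{M,\sigma}$ is by construction the $\mathcal{P}_M$-module they generate. The plan is to proceed by induction on the maximal depth $k$ appearing in a hypothetical nontrivial relation
$$F=\sum_j f_j\,\sigma_p(\underline{s}_j,\underline{i}_j)=0,\qquad f_j\in\mathcal{P}_M.$$
The base case $k=0$ is immediate, since only $\sigma_p(\emptyset,\emptyset)=1$ appears.

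The key input is the shift--difference identity, obtained directly from the iterated sum definition: for $d(\underline{s})=k\geq 1$,
$$\sigma_p(\underline{s},\underline{i})(n+pM)-\sigma_p(\underline{s},\underline{i})(n)=\sum_{\ell=0}^{M-1}\frac{\zeta^{i_k(n+\ell p)}}{(n+\ell p)^{s_k}}\,\sigma_p(\underline{s}',\underline{i}')(n+\ell p).$$
Iterating to rewrite each $\sigma_p(\underline{s}',\underline{i}')(n+\ell p)$ as $\sigma_p(\underline{s}',\underline{i}')(n)$ plus a depth-$\leq k-2$ correction shows that $\Delta\sigma_p(\underline{s},\underline{i}):=\sigma_p(\underline{s},\underline{i})(n+pM)-\sigma_p(\underline{s},\underline{i})(n)$ lies in the $\mathcal{P}_M$-submodule generated by depth-$\leq k-1$ elements. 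Crucially, after restricting to one $pM$-residue class $n\equiv a\pmod{pM}$ with $p\mid a$ and making the substitution $z=n/p$, the factor $\zeta^{i_k n}$ becomes constant on the class (since $\zeta^{pM}=1$), and each summand $\zeta^{i_k(n+\ell p)}/(n+\ell p)^{s_k}$ becomes exactly $c_\ell/(z+\ell)^{s_k}$ for a constant $c_\ell\in K$.

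For the inductive step, assume the relation has at least two depth-$k$ terms with nonzero coefficients, say $f_1\sigma_p(\underline{s}_1,\underline{i}_1)$ and $f_2\sigma_p(\underline{s}_2,\underline{i}_2)$. Restrict to a residue class where each $f_j$ becomes a convergent power series in $z$ on a disc of radius $>1$, and set $h=f_1/f_2$. Apply $\Delta$ to the relation $F=0$, expand via Leibniz, and use the shift identity together with the inductive hypothesis (to clear the depth-$\leq k-1$ contributions) to extract a shift--difference equation for $h$ of exactly the form
$$h(z+M)-h(z)=\sum_{1\leq i\leq n\atop{0\leq j<M}}\frac{a_{ij}}{(z+j)^i}$$
for suitable $a_{ij}\in K$ and $n\geq 1$. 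The preceding Lemma then forces $h$ to be a constant $c\in K$ and every $a_{ij}=0$, giving $f_1=c f_2$. Substituting back, the two top-depth terms collapse into $f_2(c\,\sigma_p(\underline{s}_1,\underline{i}_1)+\sigma_p(\underline{s}_2,\underline{i}_2))$, strictly reducing the number of distinct depth-$k$ generators. Iterating the pairing argument, the relation reduces to one with a single depth-$k$ term, which is handled by applying $\Delta$ once more and invoking the inductive hypothesis on the resulting depth-$\leq k-1$ identity; finally the inductive hypothesis closes the depth-$\leq k-1$ part.

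The main obstacle is the bookkeeping needed to assemble the shift--difference equation for $h$ in precisely the partial-fraction form demanded by the Lemma. Concretely, one must simultaneously combine the contributions of all $\ell\in\{0,\ldots,M-1\}$ from the shift formula, track how the constants $\zeta^{i_k p\ell}$ distribute over the Lemma's indices $0\leq j<M$, and use the inductive hypothesis to cleanly cancel the residual depth-$\leq k-1$ remainders. This is precisely the place where the cyclotomic generalization from the $M=1$ setting of \cite{U3} engages the broader index range $0\leq j<M$ in the Lemma.
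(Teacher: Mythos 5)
Your overall strategy---a double induction on the depth and on the number of top-depth terms, the shift operator $\delta_M(f)(n)=f(n+pM)-f(n)$, and the partial-fraction Lemma on $h(z+M)-h(z)$---is the same as the paper's, but the proposal has two genuine gaps. The first is the collapse step: knowing $h=f_1/f_2$ is constant gives $f_1=cf_2$ with $c\neq 0$, so the relation becomes $f_2\bigl(c\,\sigma_p(\underline{s}_1,\underline{i}_1)+\sigma_p(\underline{s}_2,\underline{i}_2)\bigr)+\cdots=0$, in which \emph{both} generators still occur with nonzero coefficients. The number of distinct depth-$k$ generators has not decreased, so the claimed iteration down to a single top-depth term never gets off the ground. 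What this line of reasoning actually yields---and what the paper proves by normalizing one coefficient to $1$, so that applying $\delta_M$ annihilates that term and strictly reduces the count of nonzero top-depth coefficients---is that all depth-$k$ coefficients are constants. One must then apply $\delta_M$ once more to the relation with constant top coefficients and compare the coefficients of a fixed lower-depth generator $\sigma_p(\underline{s}';\underline{i}')$ using the inductive hypothesis.

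The second gap is the one you flag as ``the main obstacle'' but do not resolve, and it is precisely where the cyclotomic case differs from $M=1$. Distinct generators $\sigma_p(\underline{s}',s_k;\underline{i}',b)$ for varying $b$ all contribute, after applying $\delta_M$, to the coefficient of the \emph{same} $\sigma_p(\underline{s}';\underline{i}')$ and to the \emph{same} partial fractions $(z+l)^{-s_k}$, differing only in the root-of-unity factors $\zeta^{bp(l+l')}$. The Lemma only tells you that the total partial-fraction expression vanishes, i.e.\ that $\zeta^{i_kp(l+l')}+\sum_{b\neq i_k}\alpha_b\,\zeta^{bp(l+l')}=0$ for every $0\leq l<M$. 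To reach a contradiction one must then invoke the linear independence of the characters $l\mapsto\zeta^{bl}$, that is, the non-vanishing of the Vandermonde determinant of $\{1,\zeta,\dots,\zeta^{M-1}\}$, as the paper does. Without this step, generators that differ only in their $\underline{i}$-indices cannot be separated, so the proof is incomplete exactly at the point that makes the cyclotomic generalization nontrivial.
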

 
 \begin{proof} 
 
 We will prove the linear independence of the set $S_{m}:=\{\sigma_{p}(\underline{s},\underline{i}) | d(\underline{s})\leq m  \} ,$ by induction on $m.$   For any function $f: p \mathbb{N} \to K,$ we let $\delta(f)$ denote the function defined by 
$\delta(f)(n):=f(n+p)-f(n).$    Note that 
 \begin{eqnarray}\label{eqndelta}
 \delta \sigma_{p}(\underline{s};\underline{i})(n)=\frac{\zeta^{i_{k}n}}{n^{s_{k}}} \sigma_{p}(\underline{s}';\underline{i}' )(n) .
 \end{eqnarray}
 
 Let $\delta_{M}(f)(n)=f(n+pM)-f(n).$ Then 
  \begin{eqnarray}\label{eqndeltaM}
 \delta_{M}(\sigma_{p}(\underline{s};\underline{i}))(n)=\sum_{0 \leq l <M}\frac{\zeta^{i_{k} (n+pl)}}{(n+pl)^{s_{k}}}  \sigma_{p}(\underline{s}';\underline{i}' )(n+pl).
\end{eqnarray}

 We know the linear independence for  the set $S_{0}=\{ 1\}.$ Assuming that  we know the linear independence for  $S_{m-1},$ we will prove it for  $S_{m}.$ Let us suppose that $\{\sigma_{p}(\underline{s};\underline{i})  \} \cup S_{m-1}$ is linearly dependent over $\mathcal{P}_{M}.$ Then there exists an $l'$ with $0\leq l' <M$ such that we have an expression of the form 
 $$
 \sigma_{p}(\underline{s};\underline{i})=\sum_{(\underline{t},\underline{j})  \atop {d(\underline{t}) \leq m-1} } a_{\underline{t},\underline{j}}\sigma_{p}(\underline{t};\underline{j}),
 $$
 which is valid for all $n$ which satisfies $pM|(n-pl')$ and 
 with $a_{\underline{t};\underline{j}}$ a quotient of power series  which converge on an open disc containing $|z|\leq |p|.$ 

 Applying $\delta_{M}$ to the last equation we get
 \begin{eqnarray*}
 & &\sum_{0 \leq l <M}\frac{\zeta^{i_{k} (n+pl)}}{(n+pl)^{s_{k}}}  \sigma_{p}(\underline{s}';\underline{i}' )(n+pl)=\\
 & &\Big(\sum_{(\underline{t},\underline{j})  \atop {d(\underline{t}) =m-1} } \delta_{M} (a_{\underline{t},\underline{j}}) \sigma_{p}(\underline{t};\underline{j})+ \sum _{(\underline{t},\underline{j})  \atop {d(\underline{t}) < m-1} }b_{\underline{t},\underline{j}}\sigma_{p}(\underline{t};\underline{j})\Big)(n),
 \end{eqnarray*}
 for $n$ which satisfies $pM|(n-pl').$  From the identity (\ref{eqndelta}) we see that $ \sigma_{p}(\underline{s}';\underline{i}' )(n+pl)$ is equal to $ \sigma_{p}(\underline{s}';\underline{i}' )(n)$ plus a linear combination of the terms $ \sigma_{p}(\underline{t};\underline{j}
  )(n),$ with $d(\underline{t})\leq m-2 $ and with coefficients which are quotients of power series. This together with the induction hypothesis implies that 
  $$
  \sum_{0 \leq  l<M} \frac{\zeta^{i_{k}(p(l'+l))}}{(n+pl) ^{s_{k}}}=\delta_{M}(a_{\underline{s}';\underline{i}'})(n),
  $$
 which contradicts the lemma above.

 Next we do an induction on the number of elements $\sigma_{p}(\underline{s},\underline{i})$ with $d(\underline{s})=m,$ and $a_{\underline{s},\underline{i}}\neq 0.$ Suppose that we have a non-trivial equation 
 $$
 \sum _{(\underline{s},\underline{i}) \atop {d(\underline{s}) \leq m} } a_{\underline{s},\underline{i}} \sigma_{p}(\underline{s},\underline{i})=0.
 $$
 By the induction assumption on $m,$ there is an $(\underline{s},\underline{i})$ with $d(\underline{s})=m$ such that $a_{\underline{s},\underline{i}}\neq 0.$ In particular, there exists an $0\leq l' <M$ such that $a_{\underline{s},\underline{i}}$ is not the zero function when restricted to $pl'+pM \mathbb{N}.$ In the remainder of the proof we will consider all the functions as functions on $pl'+pM \mathbb{N}.$  Dividing by $a_{\underline{s},\underline{i}}$ and rearranging we get 
 $$
 \sigma_{p}(\underline{s},\underline{i})+ \sum_{  (\underline{t},\underline{j}) \neq (\underline{s},\underline{i}) \atop {d(\underline{t})=m}} b_{\underline{t},\underline{j}}\sigma_{p}(\underline{t},\underline{j})= \sum _{(\underline{t},\underline{j})\atop {d(\underline{t})<m}} b_{\underline{t},\underline{j}}\sigma_{p}(\underline{t},\underline{j}),
 $$
 where $b_{\underline{t},\underline{j}}$ are quotients of power series. 
 Applying $\delta_{M}$ to this equation and using induction on the number of $b_{\underline{t},\underline{j}}\neq 0$ with $d(\underline{t})=m$ we obtain $\delta_{M}
 (b_{\underline{t},\underline{j}})=0$ for all $(\underline{t},\underline{j})$ with $d(\underline{t})=m, $ hence these $b_{\underline{t},\underline{j}}$ are constant and equal to, say $c_{\underline{t},\underline{j}}. $
 
 So the last equation can be rewritten as 
 $$
 \sigma_{p}(\underline{s},\underline{i})+ \sum_{(\underline{t},\underline{j}) \neq (\underline{s},\underline{i}) \atop {d(\underline{t})=m}} c_{\underline{t},\underline{j}}\sigma_{p}(\underline{t},\underline{j})= \sum _{(\underline{t},\underline{j})\atop {d(\underline{t})<m}} b_{\underline{t},\underline{j}}\sigma_{p}(\underline{t},\underline{j}).
 $$
 Applying $\delta_{M}$ and using the induction hypothesis to compare the coefficients of $\sigma_{p}(\underline{s}';\underline{i}')$ we obtain that 
 $$
p^{-s_{k}}\sum _{0 \leq l <M}\frac{\zeta^{i_{k}p(l+l')}}{(z+l)^{s_{k}}}+\sum_{(a,b)\neq (s_{k},i_{k})}c_{(\underline{s}',a;\underline{i}',b)}\sum_{0 \leq l <M}p^{-a}\frac{\zeta^{bp(l+l')}}{(z+l)^{a}}=\delta_{M}(b_{(\underline{s}';\underline{i}')}),
 $$
 where we put $pz=n.$ The previous lemma  then implies that the left hand side is equal to 0. Putting $\alpha_{b}:=c_{(\underline{s};\underline{i}',b)}$ and looking at the coefficient of $\frac{1}{(z+l)^{s_{k}}}$ we find that 
 $$
 \zeta^{i_{k}p(l+l')}+\sum_{b \neq i_{k}}\alpha_{b} \zeta^{bp(l+l')}=0,
 $$
for every $0\leq l <M.$ Rephrasing we see that there exist $\beta_{b}\in K,$ for $0\leq b <M$ with $\beta_{0}=1$ such that 
$$
\sum _{0\leq b<M}\beta_{b}\zeta^{lb}=0,
$$
for every $0\leq l <M.$   This contradicts the non-vanishing of the Vandermonde determinant for $\{1,\zeta, \cdots , \zeta^{M-1} \}$. 
\end{proof}

Let $\mathcal{F}_{M}$ denote the algebra of $M$-power series functions and $\iota \in \mathcal{F}_{M}$ denote the function that sends $n$ to $n.$ Let $\mathcal{F}_{M}(\frac{1}{\iota})$ be  the algebra obtained by inverting $\iota.$ Note that $\iota$ is already invertible on the components  $i+p\mathbb{N}$  with $0<i<p.$   Let $\mathcal{F}_{M,\sigma}$ be the module over $\mathcal{F}_{M}$ generated by  the $\sigma_{p}(\underline{s};\underline{i})$'s.  Then by the shuffle product formula for series, $\mathcal{F}_{M,\sigma }$ is an algebra. Let $\mathcal{F}_{M,\sigma} (\frac{1}{\iota})=\mathcal{F}_{M,\sigma} \otimes _{\mathcal{F}_{M}} \mathcal{F}_{M}(\frac{1}{\iota}).$

\begin{corollary}\label{freecorollary}
The algebra $\mathcal{F}_{M,\sigma}$ (resp. $\mathcal{F}_{M,\sigma}(\frac{1}{\iota})$) is free with basis $\{\sigma _{p} (\underline{s};\underline{i}) | (\underline{s},\underline{i}) \in \cup_{n} (\mathbb{N}^{\times n}\times [0, M-1]^{\times n})  \}$  as a module over $\mathcal{F}_{M}$ (resp. $\mathcal{F}_{M}(\frac{1}{\iota})$). 
\end{corollary}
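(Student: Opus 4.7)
The plan is to deduce the corollary from Proposition 2.5 by analyzing the restrictions of the $\sigma_p(\underline{s};\underline{i})$ to each residue class modulo $pM$. Both $\mathcal{F}_M$ and $\mathcal{P}_M$ admit a natural product decomposition over residue classes (one convergent power series for every $l \in \{1,\ldots,pM\}$ in the case of $\mathcal{F}_M$, and one for each $l = pj$ with $1 \leq j \leq M$ in the case of $\mathcal{P}_M$), so Proposition 2.5 can be unpacked into the statement that, for every such $l = pj$, the power series $q_l^{(\underline{s};\underline{i})}$ giving the $l$-th component of $\sigma_p(\underline{s};\underline{i})$ are linearly independent over the ring of power series convergent on a disc of radius greater than $|p|$.

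The main new input is a combinatorial observation. For any $n$ with $pj < n \leq p(j+1)$, the multiples of $p$ strictly less than $n$ are exactly $\{p,2p,\ldots,pj\}$, so the sum defining $\sigma_p(\underline{s};\underline{i})(n)$ is constant on this integer interval and equals $\sigma_p(\underline{s};\underline{i})(p(j+1))$. Translated to residue classes modulo $pM$, this means that for $l = pj + r$ with $0 < r < p$ one has
$$\sigma_p(\underline{s};\underline{i})(l + pMt) = \sigma_p(\underline{s};\underline{i})(p(j+1) + pMt)$$
for every sufficiently large $t$. Reading off the two residue-class power series with the convention $f(n) = q_l(n-l)$ on residue class $l$, the difference $q_l^{(\underline{s};\underline{i})}(x) - q_{p(j+1)}^{(\underline{s};\underline{i})}(x)$ vanishes at all points $x = pMt$, which form an infinite set inside $D(0,|p|)$. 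By the Weierstrass preparation theorem the two power series are therefore identical. Consequently, the $l$-th component of $\sigma_p(\underline{s};\underline{i})$ for $p \nmid l$ coincides with its component on the neighboring $p$-divisible class $p(j+1)$ and inherits the linear independence furnished by Proposition 2.5.

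Putting everything together: given a relation $\sum a_{\underline{s};\underline{i}} \sigma_p(\underline{s};\underline{i}) = 0$ in $\mathcal{F}_{M,\sigma}$, projecting onto each residue class $l$ modulo $pM$ yields a power series relation with coefficients obtained from $a_{\underline{s};\underline{i}}|_l$, and componentwise linear independence (directly from Proposition 2.5 when $p \mid l$, and by the transfer above when $p \nmid l$) forces each of these to vanish, whence every $a_{\underline{s};\underline{i}} = 0$ in $\mathcal{F}_M$. The statement for $\mathcal{F}_{M,\sigma}(\frac{1}{\iota})$ then follows formally, since localization is flat and preserves freeness with the same basis. The only technical subtlety I anticipate is the application of the $p$-adic identity principle at the points $pMt$, which is valid precisely because $|pMt| \leq |p|$ lies inside every disc of convergence in question.
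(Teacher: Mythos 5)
Your proposal follows essentially the same route as the paper: both arguments rest on the observation that $\sigma_{p}(\underline{s};\underline{i})(n)$ depends only on which multiples of $p$ are smaller than $n$, hence is constant on each block $(pj,p(j+1)]$, so that its restrictions to the various residue classes coincide, the whole problem decomposes over residue classes, and everything reduces to Proposition 2.5, with the localized statement following formally. (The paper uses the coarser decomposition $F(\mathbb{N},K)=\oplus_{1\leq i\leq p}F(p\mathbb{N},K)$ modulo $p$, which already suffices because $\mathcal{P}_{M}$ encodes the mod-$M$ structure internally; your mod-$pM$ refinement changes nothing essential.) One step of your write-up is, however, wrong as stated: the residue-class components of $\sigma_{p}(\underline{s};\underline{i})$ are \emph{not} given by convergent power series $q_{l}^{(\underline{s};\underline{i})}$. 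If they were, $\sigma_{p}(\underline{s};\underline{i})$ would itself lie in $\mathcal{F}_{M}$ and the freeness statement would collapse; the divergence of these series is precisely why $\mathcal{F}_{M,\sigma}$ properly extends $\mathcal{F}_{M}$, and it is what Lemma 2.4 and Proposition 2.5 are designed to exploit. Consequently your appeal to the Weierstrass preparation theorem and the $p$-adic identity principle at the points $pMt$ is inapplicable. Fortunately it is also unnecessary: your own combinatorial observation already gives $\sigma_{p}(\underline{s};\underline{i})(l+pMt)=\sigma_{p}(\underline{s};\underline{i})(p(j+1)+pMt)$ as an identity of values for \emph{every} $t$, which is exactly the equality of functions you need. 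For the same reason, Proposition 2.5 should be ``unpacked'' as linear independence of the \emph{functions} $\sigma_{p}(\underline{s};\underline{i})$ restricted to a residue class over the restrictions of $M$-power series functions (a relation on one class extends by zero to a relation over $\mathcal{P}_{M}$), not as linear independence of power series over power series. With these two rephrasings your argument is correct and coincides with the paper's.
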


\begin{proof} For a set $S,$ let $F(S,K)$ denote the algebra of functions from $S$ to $K.$ We have the following decomposition 
$$
F(\mathbb{N},K)=\oplus _{1\leq i \leq p} F(p\mathbb{N},K),
$$
where we send $f \in F(\mathbb{N},K)$ to the element on the right hand side whose $i$-th component is $f_{i} \in F(p\mathbb{N},K),$ defined by 
$$
f_{i}(k)=f(k-p+i),
$$
for $k \in p\mathbb{N}.$ We have $\sigma_{p}(\underline{s};\underline{i})_{i}=\sigma_{p}(\underline{s};\underline{i}),$ for all $1\leq i \leq p,$ where we abuse the notation and denote by $\sigma_{p}(\underline{s};\underline{i})$ both the function on the left hand side of the equality whose domain is $\mathbb{N}$ and also the function on the right hand side of the equation which is its restriction to $p\mathbb{N}.$ By the definition of the power series functions, the above decomposition gives the following decompositions:
$$
\mathcal{F}_{M}=\oplus _{1 \leq i \leq p} \mathcal{P}_{M}
$$
and 
$$
\mathcal{F}_{M,\sigma}=\oplus _{1 \leq i \leq p} \mathcal{P}_{M,\sigma}.
$$

Using this, the freeness of $\mathcal{F}_{M,\sigma}$ over $\mathcal{F}_{M}$ follows from Proposition 2.0.3 and the statement for $\mathcal{F}_{M,\sigma}(\frac{1}{\iota})$ follows by localization. 
\end{proof}

\begin{definition}
Let $\mathfrak{r}: \mathcal{F}_{M,\sigma}\to \mathcal{F}_{M}$ denote the projection with respect to the above basis. We will denote the projection $\mathcal{F}_{M,\sigma}(\frac{1}{\iota})\to \mathcal{F}_{M}(\frac{1}{\iota})$ by the same notation. Similarly, let $\mathfrak{s}: \mathcal{F}_{M}(\frac{1}{\iota}) \to \mathcal{F}_{M}$ denote the projection that has the effect of deleting the principal parts of the  Laurent series expansions around 0 for the components $p\mathbb{N},$ and is identity on the components $i+p\mathbb{N}$ with $0<i<p.$ 

\end{definition}

Let $\underline{s}:=(s_{1},\cdots,s_{k}),$ and $\underline{t}:=(t_{1},\cdots,t_{l}).$ We write $\underline{t} \leq \underline{s}$ if there  exists an increasing function $j: \{1,\cdots, l \} \to \{1,\cdots , k \}$  such that $t_{i} \leq s_{j(i)},$ for all $i.$

\begin{lemma}
 Let $f$ be an $M$-power series function and let $g$ be defined as 
 $$
 g(n)=\sum _{0<a<n} f(a)\sigma_{p}(\underline{s};\underline{i})(a)
 $$
 for some $\underline{s}:=(s_{1},\cdots,s_{k})$ and $\underline{i}:=(i_{1},\cdots, i_{k}).$ Then 
 $$
 g=\sum _{(\underline{t},\underline{j})\atop {\underline{t} \leq \underline{s}}} f_{\underline{t},\underline{j}} \sigma_{p}(\underline{t},\underline{j}),
 $$
 for some $M$-power series functions $f_{\underline{t},\underline{j}}.$ Similarly, 
 if $h$ is defined as 
 $$
 h(n):=\sum_{0<a<n \atop {p|a}} \frac{f(a)}{a^{s}} \sigma_{p}(\underline{s};\underline{i})(a),
 $$ for some $s\geq 1$ then 
 $$
 h=\sum _{ (\underline{t},\underline{j}) \atop {\underline{t} \leq \underline{s}'}} f_{\underline{t},\underline{j}} \sigma_{p}(\underline{t};\underline{j}),
 $$
 for some $M$-power series functions $f_{\underline{t},\underline{j}},$ 
 where $\underline{s}':=(s_{1},\cdots,s_{k},s).$ 
 \end{lemma}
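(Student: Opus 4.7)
The plan is to prove both halves jointly by induction on $k=d(\underline{s})$: at each step the first half at depth $k$ is deduced from the second half at depth $k-1$, and then the second half at depth $k$ from the first half just established at the same depth. The base case $k=0$ reduces the first half to Proposition \ref{prop main power}, while the second half at $k=0$ is handled by the Taylor-expansion argument used below.

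For the inductive step of the first half, I interchange the orders of summation in the definition of $g(n)$ to get
\[ g(n)=F(n-1)\,\sigma_{p}(\underline{s};\underline{i})(n) - \sum_{0<a<n,\,p\mid a}\frac{\zeta^{i_k a}F(a)}{a^{s_k}}\,\sigma_{p}(\underline{s}';\underline{i}')(a), \]
where $F(n):=\sum_{1\le a\le n}f(a)$ is an $M$-power series function by Proposition \ref{prop main power}, and so is $n\mapsto F(n-1)$. The first term already has the required form with $\underline{t}=\underline{s}$; the second term is an $h$-type sum at depth $k-1$ with shift $s_k$, so the second half of the lemma at depth $k-1$ (the induction hypothesis) rewrites it as $\sum_{\underline{t}\le (\underline{s}',s_k)=\underline{s}}f_{\underline{t},\underline{j}}\sigma_{p}(\underline{t},\underline{j})$.

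For the inductive step of the second half, the key move is to rewrite $f(a)/a^s$ on each residue class $\alpha\pmod{pM}$ with $p\mid\alpha$. Writing $f(a)=p_\alpha(a-\alpha)$ and using that the radius of convergence $r$ of $p_\alpha$ satisfies $r>|p|\ge|\alpha|$, the non-archimedean disks $\{|x|<r\}$ and $\{|x+\alpha|<r\}$ coincide, so I may re-expand $p_\alpha$ around $-\alpha$ and write $f(a)=\sum_{j\ge 0} c_j^{(\alpha)}a^j$ as a power series in $a$ itself, convergent on $|a|<r$. This gives the decomposition
\[ \frac{f(a)}{a^s}=\sum_{l=1}^{s}\frac{c_{s-l}^{(\alpha)}}{a^l}+\tilde Q_\alpha(a), \]
in which the regular parts $\tilde Q_\alpha$ assemble into a single $M$-power series function $\tilde Q$ supported on $p\mathbb{N}$. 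The regular contribution $\sum_{0<a<n,\,p\mid a}\tilde Q(a)\sigma_{p}(\underline{s};\underline{i})(a)$ is handled by the first half of the lemma at depth $k$ (just proved), producing terms with $\underline{t}\le\underline{s}\le\underline{s}'$. The principal contribution is treated by the discrete Fourier identity $\mathbf{1}[M\mid a-\alpha]=\frac{1}{M}\sum_{w=0}^{M-1}\zeta^{w(a-\alpha)}$ (valid for $p\mid a$ and $p\mid\alpha$), combined with the observation $\sum_{0<a<n,\,p\mid a}a^{-l}\zeta^{wa}\sigma_{p}(\underline{s};\underline{i})(a)=\sigma_{p}((\underline{s},l);(\underline{i},w))(n)$; together these rewrite the principal part as a $K$-linear combination of $\sigma_{p}((\underline{s},l);(\underline{i},w))$ for $1\le l\le s$ and $0\le w<M$. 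Since $l\le s$, the index $(\underline{s},l)$ is $\le(\underline{s},s)=\underline{s}'$, as required.

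The main obstacle is the Taylor re-expansion step: it exploits the fact that the radius of convergence in the definition of an $M$-power series function strictly exceeds $|p|$, so that the defining disk around any class representative $\alpha$ with $p\mid\alpha$ actually contains the origin. This is precisely what allows the separation of the pole at $a=0$ from a holomorphic remainder; without it, $f(a)/a^s$ would not admit such a decomposition and the second half of the lemma would fail. Everything else — Fubini, the Fourier identity, and the tracking of the inequalities $\underline{t}\le\underline{s}$ and $(\underline{s},l)\le\underline{s}'$ — is routine once the expansion is in hand.
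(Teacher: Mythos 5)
Your proof is correct and follows the paper's argument essentially verbatim: the same interleaved induction ($g$ at depth $k$ from $h$ at depth $k-1$ via Abel summation with $F(n-1)\sigma_{p}(\underline{s};\underline{i})(n)$ minus a boundary sum, then $h$ at depth $k$ from $g$ at depth $k$ by splitting $f(a)/a^{s}$ into principal and regular parts on each residue class mod $pM$), with the paper's appeal to linear independence of the characters $\chi_{i}$ simply made explicit as Fourier inversion. Your remark about the radius of convergence exceeding $|p|$ correctly isolates the point the paper uses implicitly when it re-centres the local power series at $0$.
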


\begin{proof} 
We will prove this by induction on $d(\underline{s}).$ Suppose that $d(\underline{s})=0$ and hence $\sigma_{p}(\underline{s};\underline{i})=1.$ Then for $g$ the assertion follows from Proposition \ref{prop main power}. For $0 \leq l <M,$ let  $f_{l}$ be the power series in $K[[z]]$ which has the property that $f(n)=f_{l}(n)$ for $n$ such that  $p|n$ and $M|(n-l).$ Write  $f_{l}(z)=\sum _{0 \leq i}b_{il} z^{i},$ for $|z|\leq |p|$  then 
$$
h(n)=\sum _{0 \leq l <M} \sum _{0 \leq i <s} \sum_{0 <a<n \atop p|a, \, M|(a-l)}\frac{b_{il}}{a^{s-i}}+\sum _{0<a<n }\overline{f}(a),
$$
 where $\overline{f}$ is the unique $M$-power series function which satisfies $
\overline{f}(n)=0$ if $p \nmid n$ and $\overline{f}(n)=\sum _{s\leq i}b_{il}n^{i-s}$ if $p|n$ and $M|(n-l).$ Then Proposition \ref{prop main power} implies that the second sum defines an $M$-power series function. In order to see that $h$ is an $M$-power series functions it suffices to show that the function
$$t(n):=\sum _{0<a<n \atop {p|a, \, M|(a-l)} }\frac{1}{a^{t}},$$
for any $0\leq l <M,$ is a $K$-linear combination of the $\sigma_{p}(t;i)$'s for $0\leq i <M.$ This follows immediately from the fact that the characters $\chi_{i}:\mathbb{Z}/M \to K$ defined by $\chi_{i}(\alpha)=\zeta^{i\alpha}$ are distinct for $0\leq i <M$ and hence are $K$-linearly independent.

Now assume the statement for all $(\underline{s},\underline{i})$ with $d(\underline{s})\leq k$ and fix $\underline{s}:=(s_{1},\cdots,s_{k+1})$ and $\underline{i}:=(i_{1},\cdots,i_{k+1}).$  Let $F$ be as in Proposition \ref{prop main power},  then 
$$
g(n)=F(n-1)\sigma_{p}(\underline{s};\underline{i})(n)-\sum_{0<n_{k+1}<n \atop {p|n_{k+1}}}F(n_{k+1})\frac{\zeta^{i_{k+1}n_{k+1}}\sigma_{p}(\underline{s}';\underline{i}')(n_{k+1})}{n_{k+1}^{s_{k+1}}}
$$ 
 and the statement follows from the induction hypothesis on $h.$
 
On the other hand, to prove the statement on $h,$ we write $h(n)=$
\begin{eqnarray*}
\sum _{0 \leq l <M} \sum _{0 \leq i <s} \sum_{0 <a<n \atop p|a, \, M|(a-l)}\frac{b_{il}}{a^{s-i}}\sigma_{p}(\underline{s};\underline{i})(a)+\sum_{0<a<n }\overline{f}(a)\sigma_{p}(\underline{s};\underline{i})(a),
\end{eqnarray*}
using the notation above. The second summand defines a function which is of the form as in the statement of the lemma because of the induction hypothesis on $g.$ To finish the proof, it suffices  to show that  the function which sends $n$ to 
$$
\sum _{0<a<n \atop {p|a, \, M|(a-l)}}\frac{1}{a^t}\sigma_{p}(\underline{s};\underline{i})(a)
$$
is a $K$-linear combination of the functions $\sigma_{p}(\underline{s},t;\underline{i},j),$ for $0\leq j <M.$ We prove this exactly as above. 
 \end{proof}

\begin{proposition}
For any $\underline{s}$ and $\underline{m},$ $\sigma(\underline{s};\underline{i};\underline{m}) \in \mathcal{F}_{M,\sigma}.
$
\end{proposition}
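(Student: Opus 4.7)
The plan is to prove this by induction on the depth $d(\underline{s})=k$, peeling off the outermost summation variable $n_k$ and applying Lemma 2.7 in either of its two forms according to whether $m_k=0$ or $m_k\neq 0$.

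The base case $k=0$ is trivial since $\sigma(\underline{s};\underline{i};\underline{m})=1\in\mathcal{F}_{M,\sigma}$. For the inductive step, write
\[
\sigma(\underline{s};\underline{i};\underline{m})(n)=\sum_{\substack{0<n_k<n\\ p\mid(n_k-m_k)}}\frac{\zeta^{i_k n_k}}{n_k^{s_k}}\,\sigma(\underline{s}';\underline{i}';\underline{m}')(n_k),
\]
where $\underline{s}'$, $\underline{i}'$, $\underline{m}'$ omit the last coordinate. By the induction hypothesis applied at depth $k-1$, we can write
\[
\sigma(\underline{s}';\underline{i}';\underline{m}')=\sum_{(\underline{t},\underline{j})} f_{\underline{t},\underline{j}}\,\sigma_{p}(\underline{t};\underline{j})
\]
with each $f_{\underline{t},\underline{j}}$ an $M$-power series function, so that
\[
\sigma(\underline{s};\underline{i};\underline{m})(n)=\sum_{(\underline{t},\underline{j})}\sum_{\substack{0<n_k<n\\ p\mid(n_k-m_k)}}\frac{\zeta^{i_k n_k}\,f_{\underline{t},\underline{j}}(n_k)}{n_k^{s_k}}\,\sigma_p(\underline{t};\underline{j})(n_k).
\]

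Now split on $m_k$. If $m_k\neq 0$, then the congruence $p\mid(n_k-m_k)$ forces $p\nmid n_k$, so by Example 2.2(i) the function $n_k\mapsto \zeta^{i_k n_k}/n_k^{s_k}$ (extended by $0$ on $p\mathbb{N}$) is an $M$-power series function; multiplying by $f_{\underline{t},\underline{j}}$ and by the indicator of $p\mid(n_k-m_k)$ (which is also an $M$-power series function) keeps us in the class of $M$-power series functions, by Example 2.2(ii). The inner sum then has the form of the function $g$ in Lemma 2.7 and hence lies in $\mathcal{F}_{M,\sigma}$. If $m_k=0$, then $p\mid n_k$ on the support, and $F(n_k):=\zeta^{i_k n_k} f_{\underline{t},\underline{j}}(n_k)$ times the appropriate indicator is an $M$-power series function; the inner sum is now exactly of the form of the function $h$ in Lemma 2.7 (with $s=s_k$), and again belongs to $\mathcal{F}_{M,\sigma}$.

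Summing finitely many elements of $\mathcal{F}_{M,\sigma}$ completes the induction. The only place any care is required is in the verification that the coefficient $\zeta^{i_k n_k}/n_k^{s_k}$ (together with the residue constraint imposed by $m_k$) is an $M$-power series function; this is the content of Example 2.2 applied separately on the components $p\nmid n_k$ and $p\mid n_k$, but once that is in hand the result falls straight out of Lemma 2.7. The structural obstacle is therefore essentially absent: all the genuine work went into proving the freeness statement (Corollary 2.6) and the two-part Lemma 2.7, after which this proposition is a clean induction on depth.
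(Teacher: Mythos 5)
Your argument is correct and is essentially the paper's own proof: induction on the depth, peeling off the outermost summation variable, expressing the inner $\sigma(\underline{s}';\underline{i}';\underline{m}')$ as an $\mathcal{F}_{M}$-combination of the $\sigma_{p}(\underline{t};\underline{j})$ via the induction hypothesis, and then invoking the preceding lemma (Lemma 2.8 in the paper's numbering, not 2.7) to absorb the outer sum. Your explicit case split on $m_{k}=0$ versus $m_{k}\neq 0$, matching the two forms $g$ and $h$ of that lemma, is exactly what the paper's terser appeal to ``the previous lemma'' implicitly relies on.
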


\begin{proof}
We will prove this by induction on $d(\underline{s}).$ If $d(\underline{s})=1,$ then $\sigma(\underline{s};\underline{i};\underline{m})=\sigma_{p}(\underline{s};\underline{i})$ if $m_{1}=0;$ and  $\sigma(\underline{s};\underline{i};\underline{m})\in \mathcal{F}_{M}$ otherwise, by Proposition \ref{prop main power}. Suppose we know the result for $d(\underline{s})\leq k,$ and fix $\underline{s}$ with $d(\underline{s})=k+1.$  

 Since 
$$\sigma(\underline{s};\underline{i};\underline{m})(n)=\sum _{0<a<n \atop {p| (a-m_{k+1})}} \frac{\zeta^{ai_{k+1}}\sigma(\underline{s}';\underline{m}')(a)}{a^{s_{k+1}}},$$
using the induction hypothesis we realize that we only need to show that functions of the   form 
$$
\sum _{0<a<n \atop {p| (a-m)}}\frac{f(a)}{a^{s}}\sigma_{p}(\underline{t};\underline{j})(a),
$$
with $f$ an  $M$-power series function, are in $\mathcal{F}_{M,\sigma}$ and this is exactly the statement of the previous lemma. 
 \end{proof}

In fact, from the proof above it follows that $\sigma(\underline{s};\underline{i};\underline{m})$  is an $\mathcal{F}_{M}$-linear combination of $\sigma_{p} (\underline{t};\underline{j})$ with $\underline{t} \leq \underline{s}.$  
 
 \begin{definition}
 For a function $f \in \mathcal{F}_{M,\sigma},$ let $\tilde{f}:=\mathfrak{r}(f) \in \mathcal{F}_{M}.$  We call $\tilde{f}$ the {\it regularization}  of $f.$ Since by the previous proposition $\sigma(\underline{s};\underline{i};\underline{m}) \in \mathcal{F}_{M,\sigma},$ we let $\tilde{\sigma}(\underline{s};\underline{i};\underline{m})\in \mathcal{F}_{M}$ be its regularization and for $0< l \leq M,$ we let 
 $\underline{\sigma}(\underline{s};\underline{i};\underline{m})[l]:=\lim_{N \to \infty}\tilde{\sigma}(\underline{s};\underline{i};\underline{m})(lq^N)$ and  $\underline{\sigma}(\underline{s};\underline{i};\underline{m}):=\underline{\sigma}(\underline{s};\underline{i};\underline{m})[1].$

 \end{definition}
 
 For a function $f: \mathbb{N} \to K$ and $0 \leq m <p,$ let $f_{[m]}$ denote the function which is equal to $f$ for values $n$ which are congruent to $m$ modulo $p$ and is 0 otherwise.  Recall that $\gamma(\underline{s};\underline{i};\underline{m})(n):=\zeta^{ni_{k}}n^{-s_{k}}\cdot\sigma(\underline{s}';\underline{i}';\underline{m}')_{[m_{k}]}(n).$  We will define the regularized version $\tilde{\gamma}(\underline{s};\underline{i};\underline{m})$ of $\gamma(\underline{s};\underline{i};\underline{m})$ as follows. If $m_{k} \neq 0,$ then it is defined as $\tilde{\gamma}(\underline{s};\underline{i};\underline{m})(n)=\zeta^{ni_{k}}n^{-s_{k}}\cdot\tilde{\sigma}(\underline{s}';\underline{i}';\underline{m}')_{[m_{k}]}(n).$ If $m_{k}=0,$ and for $0 \leq l <M,$ $p_{l}(z)=a_{0l}+a_{1l}z+\cdots$ is such that $\tilde{\sigma}(\underline{s}';\underline{i}';\underline{m}')(n)=p_{l}(n)$ for $p|n$ and $M|(n-l),$ then 
 $\tilde{\gamma}(\underline{s};\underline{i};\underline{m})(n):=\zeta^{ni_{k}}(a_{s_{k}l}+a_{s_{k}+1,l }n+\cdots) ,$ if $p|n$ and $M|(n-l)$ and 0 if $p \nmid n.$ Finally, we let $\underline{\gamma}(\underline{t};\underline{i};\underline{m})[l]=\lim_{N \to \infty}\tilde{\gamma}(\underline{t};\underline{i};\underline{m})(lq^{N})=\zeta^{li_{k}}a_{s_{k}l}$ and $\underline{\gamma}(\underline{t};\underline{i};\underline{m}):=\underline{\gamma}(\underline{t};\underline{i};\underline{m})[1].$ 
 
 Another way to describe this is as follows. For any $\underline{s}, \underline{i}$ and $\underline{m}, $ $\gamma(\underline{s};\underline{i};\underline{m}) \in \mathcal{F}_{M,\sigma}(\frac{1}{\iota}),  $  and $\tilde{\gamma}(\underline{s};\underline{i};\underline{m}):=\mathfrak{s} \circ \mathfrak{r}(\gamma(\underline{s};\underline{i};\underline{m})).$
  

\begin{definition}
Let $\pazocal{P}_{M}$ (resp. $\pazocal{S}_{M},$  $\tilde{\pazocal{S}}_{M}$) denote the   $\mathbb{Q}(\zeta)$-algebra (resp. vector space) spanned  by the $\underline{\sigma}(\underline{s};\underline{i};\underline{m})$ (resp. $\sigma(\underline{s};\underline{i};\underline{m}),$  $\tilde{\sigma}(\underline{s};\underline{i};\underline{m})$) and the $\underline{\gamma}(\underline{s};\underline{i};\underline{m})$  (resp. $\gamma(\underline{s};\underline{i};\underline{m}),$  $\tilde{\gamma}(\underline{s};\underline{i};\underline{m})$). \end{definition}

We call  $p$-adic numbers of the form $\underline{\sigma}(\underline{s};\underline{i};\underline{m})$ or $\underline{\gamma}(\underline{s};\underline{i};\underline{m}),$ the {\it cyclotomic p-adic iterated sums} (or {\it cis}).

\section{proof of theorem 1.1}

\subsection{Cyclotomic $p$-adic multi-zeta values} We  recall   notation and concepts from \cite{U2}.
Fix $M \geq 1,$ and $p \nmid M.$ Let $K \langle \langle e_{0}, \cdots, e_{M} \rangle \rangle$ denote the ring of non-commutative power series in the variables $e_{0}, e_{1}, \cdots, e_{M}.$  Studying the action of the crystalline frobenius on the fundamental group of $\mathbb{G}_{m} \setminus \mu_{M},$ we defined, for every $1 \leq i \leq M,$ $g_{i} \in K \langle \langle e_{0}, \cdots, e_{M} \rangle \rangle$ \cite[\textsection 2.2.3]{U2}. For an element $\alpha \in K \langle \langle e_{0}, \cdots, e_{M} \rangle \rangle$ and any monomial $e^I=e_{i_{1}}\cdots e_{i_n},$ let $\alpha[e^I]$ denote the coefficient of $e^I$ in $\alpha.$ If $e^I=e_{i_{1}}\cdots e_{i_n},$ we call $w(e^I)=w(e_{i_{1}}\cdots e_{i_n}):=n,$ the {\it weight} of $e^I.$ By \cite[(2.2.7)]{U2}, we see that $\{g_{i}[e^I] | I \} =\{g_{j}[e^{I}] | I \},$ for any $i,j.$ Therefore it makes sense to study only one of the $g_{i}$'s. We let $g:=g_{M},$ and we defined the {\it cyclotomic p-adic multi-zeta values} (or {\it cmv}) as the coefficients $g[e_{i_1} \cdots e_{i_n}],$ and we used the notation 
$$
g[e_{0} ^{s_k -1}e_{i_k}\cdots e_{0} ^{s_1 -1}e_{i_1}  ]=p^{\sum s_i} \zeta_{p}(s_{k}, \cdots, s_1;i_k,\cdots , i_1),
$$
where $1 \leq i_{1}, \cdots , i_k \leq M.$ We call $k$ the {\it depth} of the monomial  $e_{0} ^{s_k -1}e_{i_k}\cdots e_{0} ^{s_1 -1}e_{i_1} $ or the corresponding {\it cmv}, and denote it by $d(e^I).$ 
 
Let $\pazocal{U}_{M}$ denote the affinoid that is obtained by removing  discs of radius one in $\mathbb{P}^{1}_{K}$ around every $M$-th root of unity. Let $\pazocal{A}_{M}$ denote the algebra of rigid analytic functions on $\pazocal{U}_{M}.$ Then choosing the lifting $\pazocal{F}$ of frobenius given by $\pazocal{F}(z)=z^p, $ defines  a corresponding  element $\mathbcal{g}_{\pazocal{F}} \in \pazocal{A}_{M} \langle \langle e_{0}, \cdots, e_{M}  \rangle \rangle.$ Let $\omega_{0}:=d log (z)$ and $\omega_{i}:=dlog(z-\zeta ^{i}),$ for $1 \leq i \leq M.$ For $1 \leq i \leq M,$ let $\underline{i}$ be the unique integer such that $M| (i-p \underline{i}).$ Then in \cite[(2.2.10)]{U2},  we proved the following fundamental differential equation for $\mathbcal{g}_{\pazocal{F}}:$
 \begin{eqnarray*}\label{funddiffeq1}
d \mathbcal{g}_{\pazocal{F}}=\sum e_{i} \pazocal{F}^{*}\omega_{i} \cdot \mathbcal{g}_{\pazocal{F}}-\mathbcal{g}_{\pazocal{F}}\cdot \sum p g_{i} ^{-1}e_{i}g_{i} \omega_{\underline{i}},
\end{eqnarray*}
where  the sums  are over $0 \leq i \leq M$ and $g_{0}:=1.$ 
We can rewrite this as follows, 
\begin{eqnarray}\label{funddiffeq2}
d \mathbcal{g}_{\pazocal{F}}[e^I]=\pazocal{F}^*\omega_{a}\mathbcal{g}_{\pazocal{F}}[e^{I'}]-p\sum_{i,J,K}(\mathbcal{g}_{\pazocal{F}}g_{i} ^{-1})[e^J]g_{i}[e^K]\omega_{\underline{i}}
\end{eqnarray}
where $I=(a,I'),$ and  the second sum runs over $J,K$ and $0\leq i \leq M$ such that $(J,i,K)=I.$

Let us $h$ denote $\mathbcal{g}_{\pazocal{F}}(\infty).$ Then we proved the following equation in \cite[(4.1.1)]{U2}  that relates $h$ and the $g_{i}$'s: 
\begin{eqnarray}\label{relating h and g}
h\cdot \sum  g_{i} ^{-1}e_{i}g_{i}=\sum  e_{i} \cdot h,
\end{eqnarray}
where the sums are over $0 \leq i \leq M.$ 

For   $\alpha \in K[[ z]]\langle \langle e_{0}, \cdots, e_{n} \rangle \rangle,$ and a monomial  $e^I,$ note that $\alpha[e^I] \in K[[ z]]$ is the coefficient of $e^I$ in $\alpha.$ We let $\alpha \{ e^I\}$ denote the function from $\mathbb{N}$ to $K$ that sends $n$ to the coefficient of $z^n$ in $\alpha[e^I]. $ If $\alpha \in \pazocal{A}_{M}\langle \langle e_{0}, \cdots, e_{n} \rangle \rangle, $ we define $\alpha\{ e^I\} $ by first viewing $\alpha$ in $K[[ z]]\langle \langle e_{0}, \cdots, e_{n} \rangle \rangle,$ by expanding around the origin.

\subsection{Proof of Theorem 1.1} In order to prove Theorem 1.1, we need to show that $g_i[e^I] \in \pazocal{P}_{M},$ for every monomial $e^I$ and $1 \leq i \leq M.$   We will prove this together with the statement that $\mathbcal{g}_{\pazocal{F}}\{e^I \} \in \pazocal{P}_{M}\cdot \tilde{\pazocal{S}}_{M}.$ The proof will be by induction on the weight of $e^{I}.$  We will first show that $\mathbcal{g}_{\pazocal{F}}\{ e^I \} \in K\cdot \pazocal{S}_{M},$ then we will prove in fact that it lies in $K \cdot \tilde{\pazocal{S}}_{M}$ and finally in $\pazocal{P}_{M} \cdot \tilde{\pazocal{S}}_{M}.$ 

We will prove the following statements together by induction on $w:$

(i) $\mathbcal{g}_{\pazocal{F}}\{ e^I \} \in \pazocal{P}_{M}\cdot \tilde{\pazocal{S}}_{M},$ for $w(I) \leq w$

(ii) $h[e^J] \in \pazocal{P}_{M}$ if $w(J) \leq w-1.$ 

and 

(iii) $g_{i}[e^J] \in \pazocal{P}_{M}$ if $w(J) \leq w-1$

$\bullet$ Let us look at the statements (i), (ii) and (iii) for $w=1.$ 

From $d \mathbcal{g}_{\pazocal{F}}[e_{0}]=0,$ we see that $\mathbcal{g}_{\pazocal{F}}[e_{0}]=0.$ Similarly, from $d\mathbcal{g}_{\pazocal{F}}[e_{a}]=\pazocal{F}^{*}\omega_{a}-p\omega_{\underline{a}},$ we see that 
$$
\mathbcal{g}_{\pazocal{F}}[e_{a}](z)=p \sum _{0 <n \atop {p \nmid n}}\frac{(\zeta^{-\underline{a}}z)^n}{n}.
$$

From this we see that $(i)$ is valid for $w=1;$ as for $(ii)$ and $(iii),$ they are trivially true for $w=1.$

$\bullet$ Assume that we know (i), (ii) and (iii) for $w.$ We will prove them for $w$ replaced with $w+1.$

 Note that  by the induction assumption $\mathbcal{g}_{\pazocal{F}}\{ e^J \} \in \pazocal{P}_{M} \cdot \tilde{\pazocal{S}}_{M}\subseteq K\cdot \pazocal{S}_{M},$ for $w(J) \leq w.$ This implies that $\mathbcal{g}_{\pazocal{F}}\{e^I \} \in K\cdot \pazocal{S}_{M},$ if $w(I)=w+1,$ by the differential equation  (\ref{funddiffeq2}). 
 
 By construction \cite[\textsection 2.2.4]{U2}, $\mathbcal{g}_{\pazocal{F}}[e^I]$ is a rigid analytic function on $\pazocal{U}_{M}.$ Therefore by \cite[Corollary 3.0.4]{U2}, for any $0 \leq l <pM,$ if $\lim_{N \to \infty}lq^{N}\mathbcal{g}_{\pazocal{F}}\{e^I\}(lq^N)$ exists then it is equal to 0. 

Now note that by the induction assumption $\mathbcal{g}_{\pazocal{F}}\{e^J\} \in \pazocal{P}_{M} \cdot \tilde{\pazocal{S}}_{M}\subseteq K\cdot \tilde{\pazocal{S}}_{M},$ for $w(J) \leq w.$ In particular,  $\mathbcal{g}_{\pazocal{F}}\{e^J \} $ is an $M$-power series function. Then the differential equation shows that the function which sends $n$ to $n\cdot \mathbcal{g}_{\pazocal{F}}\{e^I\}(n)$ defines an $M$-power series function by Proposition \ref{prop main power}. This implies that the limits $\lim_{N \to \infty}lq^{N}\mathbcal{g}_{\pazocal{F}}\{e^I\} (lq^N)$ exist, for any $0 \leq l <pM,$ and therefore they are 0. This together with the above fact that  $n\cdot \mathbcal{g}_{\pazocal{F}}\{e^I\} ( n)$ is an $M$-power series function then implies that $\mathbcal{g}_{\pazocal{F}}\{e^I \}( n)$ is an $M$-power series function. Therefore, we have $\mathbcal{g}_{\pazocal{F}}\{ e^I\} \in K\cdot \tilde{\pazocal{S}}_{M}.$

 Now reinterpreting the fact that $\lim_{N \to \infty}q^N \mathbcal{g}_{\pazocal{F}}\{e^I\} ( q^N)=0,$ using the differential equation (\ref{funddiffeq1}) for $d \mathbcal{g}_{\pazocal{F}}[e^I] ,$ we see, by the induction hypotheses and the definition of $\pazocal{P}_{M},$  that with $e^I=e_{a}e^Je_b:$  

(a) if $1 \leq a,b\leq M,$ then we get 
$$
\zeta^{-\underline{a}}g_{a}[e^Je_{b}]-\zeta^{-\underline{b}}g_{b}[e_{a}e^{J}]\in \pazocal{P}_{M}
$$

(b) If $1 \leq a \leq M$ and $b=0$ then 
$$g_{a}[e^{J}e_{0}] \in \pazocal{P}_{M}$$ 

(c) If $1 \leq b \leq M$ and $a=0$ then 
$$
 g_{b}[e_{0}e^{J}] \in \pazocal{P}_{M}.
 $$

(d) If $a=b=0,$ we do not get any new information.

Using (a)-(c) we immediately see the following lemma.

\begin{lemma}\label{lemmae0} If $1 \leq i \leq M,$ and $R$ is of weight $w,$ and such that $e^R$ contains an $e_{0}$ factor then $g_{i}[e^R]\in \pazocal{P}_{M}.$ 
\end{lemma}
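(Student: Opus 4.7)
The plan is a short sub-induction on how far the rightmost $e_0$ sits from the right end of $e^R$, using (a)--(c) as the only inputs. Write $e^R = e_{c_1}\cdots e_{c_w}$ with some $c_j = 0$, let $\ell := \max\{\, j : c_j = 0 \,\}$, and set $d := w - \ell \geq 0$. I will prove $g_i[e^R] \in \pazocal{P}_M$ for every $1 \leq i \leq M$ by induction on $d$.

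If $d = 0$ the word ends in $e_0$, so $e^R = e^{R'} e_0$; for any $1 \leq i \leq M$, applying (b) to the weight-$(w+1)$ word $e^I := e_i e^{R'} e_0$ yields $g_i[e^R] \in \pazocal{P}_M$ directly. If $d \geq 1$ then $c_w \neq 0$, so (a) applied to the weight-$(w+1)$ word $e^I := e_i e^R$, with outer letters $a = i$ and $b = c_w$, gives
$$
\zeta^{-\underline{i}}\, g_i[e^R] \;-\; \zeta^{-\underline{c_w}}\, g_{c_w}[e_i e_{c_1}\cdots e_{c_{w-1}}] \;\in\; \pazocal{P}_M.
$$
The auxiliary word $e_i e_{c_1}\cdots e_{c_{w-1}}$ still has weight $w$, still contains an $e_0$ (its rightmost $e_0$ has moved from position $\ell$ to position $\ell+1$), and the associated measure is now $d-1$. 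The inductive hypothesis places $g_{c_w}[e_i e_{c_1}\cdots e_{c_{w-1}}]$ in $\pazocal{P}_M$, and since $\zeta \in \mathbb{Q}(\zeta) \subseteq \pazocal{P}_M$ is a unit, $g_i[e^R] \in \pazocal{P}_M$ follows.

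The argument is really just combinatorial bookkeeping rather than a genuine obstacle: one step of (a) replaces $e^R$ by a new weight-$w$ word obtained by prepending $e_i$ and dropping $e_{c_w}$, so the positions of all remaining $e_0$'s shift up by one and $d$ strictly decreases, reaching $0$ after at most $d$ iterations, at which point (b) terminates the induction. This is the sense in which the lemma is an immediate consequence of (a)--(c). (A symmetric argument using (c) and the leftmost $e_0$ would work equally well.)
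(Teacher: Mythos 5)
Your proof is correct and makes precise exactly what the paper leaves implicit: the paper only asserts that the lemma follows ``immediately'' from (a)--(c), and your induction on the distance $d$ of the rightmost $e_{0}$ from the right end --- using (a) to strictly decrease $d$ (the prepended $e_{i}$ shifts the surviving $e_{0}$ one step closer to the right end) and (b) to terminate at $d=0$, with the factors $\zeta^{-\underline{a}}$ harmless since $\pazocal{P}_{M}$ is a $\mathbb{Q}(\zeta)$-algebra --- is the intended argument. No gaps.
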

This lemma together with the relation (\ref{relating h and g}) implies the statement (ii) above for $w$ replaced with $w+1:$ 

\begin{proposition}\label{lemmah}
If $R$ has weight $w,$ then 
$h[e^R] \in \pazocal{P}_{M}.$ 
\end{proposition}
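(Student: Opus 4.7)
The plan is to apply the fundamental relation (\ref{relating h and g}) and extract the coefficient of the weight-$(w+1)$ monomial $e_0 \cdot e^R$ on both sides, where $e^R = e_{r_1} \cdots e_{r_w}$ is the given weight-$w$ word. On the right-hand side $\sum_j e_j h$, only the $j=0$ summand contributes to this coefficient (since $e_0 e^R$ begins with $e_0$), giving exactly $h[e^R]$. On the left-hand side, I would expand each factor $h \cdot g_j^{-1} e_j g_j = \sum_{A,B,C} h[A]\,g_j^{-1}[B]\,g_j[C]\cdot A B e_j C$ and parametrize the resulting sum by the position $p \in \{1,\dots,w+1\}$ of the distinguished letter $e_j$ inside $e_0 e^R$ (which forces $j$ to be the letter at position $p$) together with a split $q \in \{0,\dots,p-1\}$ of the preceding subword between $A$ and $B$.

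Next I would identify the summands containing factors of weight $\geq w$, the only ones not immediately absorbed by the inductive hypotheses (ii) and (iii). For any $p \leq w$, and for $p = w+1$ with $0 < q < w$, every factor has weight strictly less than $w$; the relation $g_0 = 1$ also forces any contribution with a $g_0[\cdot]$-factor of positive weight to vanish. The two delicate contributions both occur at $p = w+1$: the $q=w$ split yields $h[e_0 e_{r_1}\cdots e_{r_{w-1}}]$, handled by iteration below, while the $q=0$ split yields $g_{r_w}^{-1}[e_0 e_{r_1}\cdots e_{r_{w-1}}]$; its $m=1$ expansion term $-g_{r_w}[e_0 e_{r_1}\cdots e_{r_{w-1}}]$ lies in $\pazocal{P}_M$ by Lemma \ref{lemmae0}, since the argument word contains the prepended $e_0$, and the higher-order terms in the $g^{-1}$-expansion involve only $g_{r_w}[\cdot]$-values of weight strictly less than $w$, controlled by induction. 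Collecting, this produces the key reduction
\begin{equation*}
h[e^R] \;\equiv\; h[e_0 e_{r_1}\cdots e_{r_{w-1}}] \pmod{\pazocal{P}_M},
\end{equation*}
valid for every word of weight $w$.

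I would then iterate this reduction, each step prepending an $e_0$ and dropping the last letter, until after $w$ iterations the word has become $e_0^w$, so $h[e^R] \equiv h[e_0^w] \pmod{\pazocal{P}_M}$. It remains to verify that $h[e_0^w] = 0$: in the differential equation (\ref{funddiffeq2}) for $d\mathbcal{g}_{\pazocal{F}}[e_0^w]$ the second sum collapses, since no $e_i$ with $i \neq 0$ appears in $e_0^w$ and $g_0 = 1$ forces $K = \emptyset$ in the only surviving decomposition $J = e_0^{w-1},\, K = \emptyset,\, i = 0$; together with $\pazocal{F}^*\omega_0 = p\omega_0$, this yields $d\mathbcal{g}_{\pazocal{F}}[e_0^w] = p\omega_0\,\mathbcal{g}_{\pazocal{F}}[e_0^{w-1}] - p\,\mathbcal{g}_{\pazocal{F}}[e_0^{w-1}]\,\omega_0 = 0$. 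An induction on $w$ starting from $\mathbcal{g}_{\pazocal{F}}[e_0] = 0$, combined with the basepoint normalization, forces $\mathbcal{g}_{\pazocal{F}}[e_0^w] \equiv 0$, and hence $h[e_0^w] = 0 \in \pazocal{P}_M$.

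The main obstacle is the correct choice of the monomial to extract. Prepending $e_0$ rather than $e_i$ with $i \geq 1$ is essential for two linked reasons: it causes the otherwise-dangerous $g^{-1}$-factor at $(p,q) = (w+1,0)$ to act on a word automatically containing an $e_0$, so that Lemma \ref{lemmae0} applies, and it guarantees that the iterated reduction only ever appends more $e_0$'s, terminating cleanly at the tractable word $e_0^w$ after exactly $w$ steps.
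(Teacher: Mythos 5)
Your proof is correct and follows essentially the same route as the paper: extracting the coefficient of $e_0e^R$ from the relation $h\cdot\sum g_i^{-1}e_ig_i=\sum e_i\cdot h$, using Lemma \ref{lemmae0} to absorb the $g_{r_w}^{-1}[e_0e^{R'}]$ term, iterating the reduction $h[e^R]\equiv h[e_0e^{R'}]$ down to $h[e_0^w]$. The only (immaterial) divergence is at the final step, where the paper invokes the grouplike identity $h[e_0^w]=h[e_0]^w/w!$ rather than re-deriving $\mathbcal{g}_{\pazocal{F}}[e_0^w]=0$ from the differential equation as you do.
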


\begin{proof}

Now for any $e^R$  with $w(R)=w(I)-1$ let us look at the coefficients of $e_0e^R$ on both sides of the identity 
 $$
h\cdot \sum _{0 \leq i \leq M} g_{i} ^{-1}e_{i}g_{i}=\sum _{0\leq i \leq M} e_{i} \cdot h
$$  
to get 
$$
h[e^R]-(hg_{r} ^{-1})[e_0e^{R'}]\in \pazocal{P}_{M}
$$
by the induction hypotheses on $h$ and $g_{a},$ where $e^R=e^{R'}e_{r}.$ Again by this hypothesis we see that 
$$
(hg_{r} ^{-1})[e_0e^{R'}]- (h[e_0e^{R'}]-g_r[e_0e^{R'}] )\in \pazocal{P}_{M}.
$$
Noting that $g_{r}[e_{0}e^{R'}] \in \pazocal{P}_{M}$ we arrive at 
$$
h[e^R]- h[e_0e^{R'}] 
\in \pazocal{P}_{M}.
$$
Replacing $e^R$ with $e_0e^{R'}$ above we see that 
$$
h[e_0e^{R'}] -h[e_0 ^2e^{R''}] \in \pazocal{P}_{M}
$$
where $e^{R'}=e^{R''}e_{r'}.$ Proceeding in this manner and adding all the terms we obtain 
$$
h[e^R]-h[e_{0} ^w] \in \pazocal{P}_{M},
$$
where $w$ is the weight of $e^R.$ Since $h[e_{0} ^w]=\frac{h[e_0] ^w}{w!}=0,$ we have 
$$
h[e^R] \in\pazocal{P}_{M}.
$$
\end{proof}

Let us  continue with the proof of (iii) for $w$ replaced with $w+1.$ We need to show that $g_{i}[e^J] \in \pazocal{P}_{M}$ for $w(J)=w.$ By the above we know this statement if $e^J$ has an $e_{0}$ factor. 

Suppose that $R$ has weight $w-1$ and let us look at the coefficients of $e_a e^Re_b$  in the identity
$$
h\cdot \sum _{0 \leq i \leq M} g_{i} ^{-1}e_{i}g_{i}=\sum _{0\leq i \leq M} e_{i} \cdot h
$$  
to obtain
$$
(hg_{b} ^{-1})[e_ae^R]+g_{a}[e^Re_b] -h[e^Re_b] \in \pazocal{P}_{M},
$$
by Proposition \ref{lemmah} and the induction assumption on $g_{i}.$ Simplifying further using the same results we have
\begin{eqnarray}\label{keyidentity}
g_{a}[e^Re_b]-g_{b} [e_ae^R] \in \pazocal{P}_{M}.
\end{eqnarray}

Now we can prove (iii) for  $w$ replaced with $w+1:$

\begin{proposition}
If $w(J)=w$ then $g_{i}[e^J] \in \pazocal{P}_{M}$ for any $1\leq i \leq M.$ 
\end{proposition}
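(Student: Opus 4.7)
The plan is to prove $g_i[e^J] \in \pazocal{P}_M$ for every monomial $e^J$ with $w(J)=w$ and every $1 \leq i \leq M$ by a case analysis. If $e^J$ contains an $e_0$ factor, Lemma~\ref{lemmae0} settles it immediately. So assume $e^J = e_{j_1} \cdots e_{j_w}$ with every $j_l \in \{1, \ldots, M\}$.

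The crucial step is to combine the two relations already on the table. From the differential-equation limit (case~(a)) we have, for $a,b\in\{1,\dots,M\}$ and $e^R$ of weight $w-1$,
\[ \zeta^{-\underline{a}}\,g_a[e^R e_b] \equiv \zeta^{-\underline{b}}\,g_b[e_a e^R]\pmod{\pazocal{P}_M}, \]
while identity~(\ref{keyidentity}) gives $g_a[e^R e_b] \equiv g_b[e_a e^R] \pmod{\pazocal{P}_M}$. Subtracting yields $(\zeta^{-\underline{a}} - \zeta^{-\underline{b}})\,g_b[e_a e^R] \in \pazocal{P}_M$. Since $p$ is invertible modulo $M$, we have $\underline{a}\equiv\underline{b}\pmod{M}$ iff $a=b$ in $\{1,\dots,M\}$, so for $a\neq b$ the factor $\zeta^{-\underline{a}}-\zeta^{-\underline{b}}$ is a nonzero element of $\mathbb{Q}(\zeta)\subset\pazocal{P}_M$, hence invertible. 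Therefore, whenever $a\neq b$ are both in $\{1,\dots,M\}$ and $e^R$ has weight $w-1$, both $g_b[e_a e^R]$ and $g_a[e^R e_b]$ lie in $\pazocal{P}_M$.

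Combining this with iterated cyclic shifts via (\ref{keyidentity}),
\[ g_i[e_{j_1}\cdots e_{j_w}] \equiv g_{j_w}[e_i e_{j_1}\cdots e_{j_{w-1}}] \equiv g_{j_{w-1}}[e_{j_w} e_i e_{j_1}\cdots e_{j_{w-2}}] \equiv \cdots \pmod{\pazocal{P}_M}, \]
the (subscript, first-letter) pair ranges over all cyclically adjacent pairs of the $(w+1)$-tuple $(i, j_1, \ldots, j_w)$. As soon as two adjacent entries differ, the previous paragraph places $g_i[e^J]$ in $\pazocal{P}_M$. The only configuration that escapes is the constant tuple $i=j_1=\cdots=j_w$, namely $e^J=e_i^w$.

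For this diagonal case invoke the shuffle relation coming from the group-likeness of $g_i$, a standard feature of its motivic/crystalline construction: $g_i[e_i]^w = w!\,g_i[e_i^w]$, so $g_i[e_i^w] = g_i[e_i]^w/w!$, which belongs to $\pazocal{P}_M$ once $g_i[e_i]\in\pazocal{P}_M$. For $w\geq 2$ this is immediate from the inductive hypothesis (the element $g_i[e_i]$ has weight $1\leq w-1$); for the base case $w=1$, the explicit formula $\mathbcal{g}_{\pazocal{F}}[e_i](z) = p\sum_{0<n,\,p\nmid n}(\zeta^{-\underline{i}}z)^n/n$ combined with the regularization machinery of Section~2 identifies $g_i[e_i]$, up to the factor $p$, with a cyclotomic $p$-adic iterated sum $\underline{\sigma}(1;-\underline{i})$, which lies in $\pazocal{P}_M$ by definition. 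The main obstacle is precisely this diagonal case: the cyclic-rotation argument collapses to triviality on a constant cyclic tuple, and a non-cyclic input (the shuffle product) is essential.
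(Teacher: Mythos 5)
Your treatment of every case except $e^J=e_i^w$ is essentially the paper's own argument: you combine relation (a) with (\ref{keyidentity}) to deduce $(\zeta^{-\underline{a}}-\zeta^{-\underline{b}})\,g_b[e_ae^R]\in\pazocal{P}_M$, hence $g_a[e^Re_b],\,g_b[e_ae^R]\in\pazocal{P}_M$ whenever $a\neq b$ (the observation that $\underline{a}=\underline{b}$ forces $a=b$ is correct, since $p$ is invertible mod $M$), and then rotate cyclically with (\ref{keyidentity}) until a mismatched adjacent pair appears. The paper organizes the rotation slightly differently, writing $e^J=e_i^{r}e^Se_ce_i^{s}$ with $c\neq i$ and shifting the trailing $e_i^{s}$ to the front, but this is the same computation, and both arguments reduce the proposition to the pure power $e^J=e_i^{w}$.

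The divergence, and the gap, is that diagonal case. The paper disposes of it by citing the vanishing $g_i[e_i^w]=0$, which comes from the construction of the $g_i$ in \cite{U2} (compare the use of $h[e_0^w]=h[e_0]^w/w!=0$ in Proposition \ref{lemmah}). Your substitute --- group-likeness giving $g_i[e_i^w]=g_i[e_i]^w/w!$, plus the induction hypothesis giving $g_i[e_i]\in\pazocal{P}_M$ when $w\geq 2$ --- is fine as far as it goes, but it bottoms out at $w=1$, where you must show $g_i[e_i]\in\pazocal{P}_M$ directly and the induction hypothesis is empty. There you assert that the explicit formula for $\mathbcal{g}_{\pazocal{F}}[e_i]$ together with the regularization machinery ``identifies $g_i[e_i]$ with a {\it cis},'' but no such identification has been established: $g_i$ is not the value of $\mathbcal{g}_{\pazocal{F}}$ at an ordinary point, and the only access to the coefficients of $g_i$ in this proof is through the limit relations (a)--(c) and the identity (\ref{relating h and g}), all of which degenerate to the tautology $0\in\pazocal{P}_M$ on the diagonal (relation (a) with $a=b=i$ reads $\zeta^{-\underline{i}}g_i[e_i]-\zeta^{-\underline{i}}g_i[e_i]\in\pazocal{P}_M$). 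Some external input about $g_i[e_i]$ from \cite{U2} is therefore unavoidable, and the paper's input is precisely $g_i[e_i^w]=0$. You correctly isolated the diagonal case as the crux, but its $w=1$ instance is not actually proved.
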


 \begin{proof} 
 We proved  the statement if $e^J$ has an $e_{0}$ factor. 
 Note that so far we have seen that if $R$ has weight $w-1$ then for any $a$ and $b$ 
 $$
 \zeta^{-\underline{a}}g_{a}[e^Re_{b}]-\zeta^{-\underline{b}}g_{b}[e_{a}e^R] \in \pazocal{P}_{M}
 $$ 
 and
 $$
 g_{a}[e^Re_b]-g_{b} [e_ae^R] \in \pazocal{P}_{M}.
 $$
 This proves the statement in case $e^J$ does not begin or end with $e_{i}.$ The case when $e^{J}=e_{i} ^{w}$ is trivially true since $g_{i}[e_{i} ^{w}]=0.$ In the remaining case  we can  write $e^J=e_{i} ^{r}e^Se_{c} e_{i} ^{s}$ for some nonzero $c \neq i$ and $r,s \geq 1.$ 
 Applying (\ref{keyidentity}) $s$-times and adding the terms we see that 
 $$
 g_{i}[e^J]-g_{i}[e_{i} ^{r+s}e^Se_{c}] \in \pazocal{P}_{M}.
 $$
 Since $c \neq i,$ by the above discussion we know that $g_{i}[e_{i} ^{r+s}e^Se_{c}] \in \pazocal{P}_{M}.$ This finishes the proof that $g_{i}[e^J] \in \pazocal{P}_{M}.$ 
 \end{proof}

Finally, we prove the statement (i) for $w$ replaced with $w+1.$ Let $e^I$ be a monomial  of weight $w+1.$ We have seen above that $\mathbcal{g}_{\pazocal{F}}\{e^{I}\} \in K \cdot \tilde {\pazocal{S}}_{M}.$ We also know by the induction assumption that $\mathbcal{g}_{\pazocal{F}}\{e^{J}\} \in \pazocal{P}_{M} \cdot \tilde {\pazocal{S}}_{M},$ for any $J$ of weight less than or equal to $w.$  
This, together with the fact we just proved that $g_{i}[e^{J}] \in \pazocal{P}_M,$ for any $1 \leq i \leq M$ and $J$ of weight less than or equal to $w,$ implies that all the coefficients that appear in the differential equation for $d \mathbcal{g}_{\pazocal{F}}[e^{I}]$ lie in $\pazocal{P}_{M}.$ This implies that $\mathbcal{g}_{\pazocal{F}}\{e^{I}\} \in \pazocal{P}_{M} \cdot \tilde {\pazocal{S}}_{M},$ proving the claim and  finishing the proof of Theorem 1.1.

\end{document}